%VersionArXiv 31/03/2015
% \documentclass[11pt]{article}

\documentclass[a4paper]{amsart}
 
\usepackage{amsthm}
\usepackage{amsmath}
\usepackage{amssymb}
\usepackage{latexsym}
\usepackage{enumerate}
\usepackage{graphicx}% Include figure files
\usepackage[utf8]{inputenc}
\usepackage{epsfig} %inclosure fig EPS
\usepackage{pgf}
\usepackage{tikz}
\usepackage{float}
\usepackage{dsfont}
\usepackage{times}
\usepackage{enumerate}

\newcommand{\inj}{{{\rm inj}\,}}

\newcommand{\Minf}{{{\rm minfoc}\,}}

\newtheorem*{theorem*}{Theorem}

\renewcommand{\thetheoremName}

\newtheorem{theorem}{Theorem}[section]
\newtheorem{lemma}[theorem]{Lemma}
\newtheorem{proposition}[theorem]{Proposition}
\newtheorem{corollary}[theorem]{Corollary}

\theoremstyle{definition}
\newtheorem{definition}[theorem]{Definition}
\newtheorem{example}[theorem]{Example}

\newtheorem{remark}[theorem]{Remark}

\numberwithin{equation}{section}

%    Absolute value notation

%\newcommand{\Hess}{\operatorname{Hess}}
\newcommand{\dist}{\operatorname{dist}}
\newcommand{\Vol}{\operatorname{Vol}}

\newcommand{\I}{\operatorname{inj}}

\newcommand{\erre}{\mathbb{R}}

\newcommand{\Inf}{\operatorname{Inf}}

\def\kam{I\!\!K^m(b)}

\newcommand{\Han}{\mathbb{H}^n(b)}
\newcommand{\Hatwo}{\mathbb{H}^2}
\newcommand{\Hathree}{\mathbb{H}^3}
\newcommand{\ene}{\mathbb{N}}

\begin{document}

%\title[On the properness conjecture]{On the properness conjecture for non-embedded submanifolds in Riemannian manifolds}

\title[Mean curvature, volume and properness]{Mean curvature, volume and properness of isometric immersions}

\author[V. Gimeno]{Vicent Gimeno*}
\address{Departament de Matem\`{a}tiques- IMAC,
Universitat Jaume I, Castell\'{o}, Spain.}
\email{gimenov@uji.es}
\author[V. Palmer]{Vicente Palmer**}
\address{Departament de Matem\`{a}tiques- INIT,
Universitat Jaume I, Castellon, Spain.}
\email{palmer@mat.uji.es}
%\thanks{}
\thanks{* Work partially supported by the Research Program of University Jaume I Project P1-1B2012-18, and DGI -MINECO grant (FEDER) MTM2013-48371-C2-2-P}
\thanks{**Work partially supported by the Research Program of University Jaume I Project P1-1B2012-18,  DGI -MINECO grant (FEDER) MTM2013-48371-C2-2-P, and Generalitat Valenciana Grant PrometeoII/2014/064 }

\begin{abstract}\keywords{focal distance, geodesic ray, Total curvature, properness, Calabi's conjecture}
\subjclass[2000]{Primary 53A20 53C40; Secondary 53C42}
We explore the relation among volume, curvature and properness of a $m$-dimensional isometric immersion in a Riemannian manifold. We show that, when the $L^p$-norm of the mean curvature vector is bounded for some $m \leq p\leq \infty$, and the ambient manifold is a Riemannian manifold with bounded geometry, properness is equivalent to the finiteness of the volume of extrinsic balls. We also relate the total absolute curvature of a surface isometrically immersed in a Riemannian manifold with its properness. Finally, we relate the curvature and the topology of a complete and non-compact $2$-Riemannian manifold $M$ with non-positive Gaussian curvature and finite topology, using  the study of the focal points of the transverse Jacobi fields to a geodesic ray in $M$ . In particular, we have explored the relation between the minimal focal distance of a geodesic ray and the total curvature of an end containing that geodesic ray.

\end{abstract}

\maketitle
\section{Introduction}\label{Intr} 

In 1965, E. Calabi stated in \cite{Ca} the following conjecture, (also known as \lq\lq the first Calabi's conjecture"): {\em prove that a complete minimal hypersurface in $\erre^n$ must be unbounded}. This conjecture turned to be false when we consider {\em immersed} hypersurfaces: in the paper \cite{Na}, N. Nadirashvili constructed a complete immersion of a minimal disk into the unit ball in $\erre^3$. Nadirashvili's complete immersion is not proper, so it is natural to wonder about the r\^ole of properness in relation with the first Calabi's conjecture.

In fact, in the paper \cite{CoM1}, T. H. Colding and W. P. Minicozzi studied the following question (also known as the  Calabi-Yau conjecture): {\em Prove that every complete embedded minimal surface in $\erre^3$ is proper.}

In \cite{CoM1} the authors proved that a complete embedded minimal disk in $\erre^3$ must be proper, so the first Calabi's conjecture is true for embedded minimal disks. Moreover they proved that a complete embedded minimal surface with finite topology, (namely, homeomorphic to a compact $2$-dimensional manifold, from which it has been removed a finite number of points, which corresponds to the number of {\em ends} of the surface), is properly immersed in $\erre^3$.

On the other hand,  W. H. Meeks and H. Rosenberg proved in the paper \cite{Mer} that a complete embedded minimal surface with positive injectivity radius in $\erre^3$ must be proper.
 This result includes Colding's-Minicozzi result because they proved too in this paper that every complete embedded minimal surface with finite topology in $\erre^3$ has positive injectivity radius. We shall extract the intrinsic aspects of this proof (see Lemma \ref{MeeksRosenberg}) to show our Theorem \ref{Mainth1}.
 
More recently, Calabi-Yau's conjecture  has been explored in broader contexts, namely, W. H. Meeks, J. Perez and A. Ros have proved, (see \cite{MPR}),  that Calabi-Yau conjecture is true when we consider embedded minimal surfaces with finite genus and countable number of ends in $\erre^3$. On the other hand, B. Coskunuzer, W. H. Meeks and G. Tinaglia proved that Calabi-Yau conjecture is not true in $\Hathree$: there exists a non properly embedded minimal plane (genus zero, one end) in $\Hathree$, (see \cite{CMT}). In this way, M. Rodriguez and G. Tinaglia have constructed a family of non-proper complete minimal disks with finite total curvature in the quotient of $\Hatwo \times \erre$ by the discrete group of isometries generated by vertical translations and screw motions, which are embedded in $\Hatwo \times \erre$, (see the paper \cite{RT}).

Taking as a  starting point all these results, several questions arises in a natural way, e.g., what happens when we consider non minimal surfaces (or submanifolds) immersed in a more general ambient space?. Or, could the embeddedness of the immersion be replaced by another hypothesis related with the volume or the metric?. Summarizing: what can be said about geometric conditions describing the properness of an immersion in a broader context?

Looking for these geometric conditions and given an isometric immersion $\varphi : P^m \to N^n$, where $P^m$ and $N^n$ are complete Riemannian manifolds, a quite natural assumption to be considered is following
\begin{equation}\label{hypovol}
\Vol(\varphi^{-1}(B^N_t)) < \infty, \quad\forall t\, >\, 0\, .
\end{equation}
Where $B^N_t$ is the geodesic $t$-ball in the ambient manifold $N$. The domains $D_t=\varphi^{-1}(B^N_t)$ are known as the {\em extrinsic} $t$-balls. 

In fact, when the ambient manifold is complete, inequality (\ref{hypovol}) is a necessary condition for properness.  However, it is not a sufficient condition: consider e. g. a negatively curved and non-compact surface with finite volume which can be immersed (via Nash immersion theorem) in a metric ball in $\erre^N$ for large $N$.

On the other hand there are immediate examples of proper non-embedded minimal surfaces in $\erre^3$, such as Enneper's surface or  proper minimal surfaces with infinite topology, such the $1$-periodic Scherk's surface which in its turn satisfies inequality (\ref{hypovol}). 

We can find in the very recent literature, (see Remark 4 in \cite{Mari}), the proof that inequality (\ref{hypovol}) satisfied by the extrinsic balls $D_t$ of a minimal immersion $\varphi: P^m \to N^n$ in a Cartan-Hadamard manifold $N^n$ with sectional curvatures bounded from above $K_N \leq b \leq 0$ implies the properness of $P^m$. An older reference of the same result, but for minimal surfaces in $\erre^3$ with finite topology can be found in Theorem 3 in \cite{Qing}. 

Concerning the techniques used, the proof in \cite{Mari} is based in the intrinsic monotonicity property satisfied by the (intrinsic) volume growth $\frac{\Vol(B^N_t)}{\Vol(B^{b,m}_t)}$ of such immersions. Here, we shall denote as $B^{b,m}_t$ the geodesic $t$-ball in the real space form $\kam$, and as $S^{b,m-1}_t$ the geodesic $t$-sphere in $\kam$. On the other hand, we can find also in Lemma 4 in  \cite{Tysk} an extrinsic approach to the use of  the monotonicity formula of the extrinsic volume growth $\frac{\Vol(D_t)}{\Vol(B^{b,m}_t)}$ to prove that a minimal hypersurface $\varphi: P^{n-1} \to \erre^n$ is proper when the extrinsic volume growth is finite, (a more restrictive hypothesis than our condition (\ref{hypovol})). 

This approach  inspires the interplay between the intrinsic and the extrinsic distances used in \cite{Mari} to prove the properness of a minimal immersion in a Cartan-Hadamard manifold as well as our own way to prove Theorem \ref{Mainth0}, where we obtain a characterization of properness for non-minimal immersions in a broader family of ambient  Riemannian manifolds, namely, the Riemannian manifolds with {\em bounded geometry}. 

Recall that a complete and non-compact  Riemannian manifold $N$ is a manifold with {\em bounded geometry} if its sectional curvatures $K_N$ are bounded from above, $K_N \leq b$, ($b \in \erre^+$), and its injectivity radius $i_0(N)=\Inf\{ i_N(p) : p \in N\}$ is positive $i_0(N) =i_0 >0$. We must remark that Cartan-Hadamard manifolds are simply connected Riemannian manifolds with bounded geometry, being $b=0$ and $i_0(N)= \infty$ in this case.

In fact, we have used a comparison for the volume of geodesic balls in a submanifold  with controlled $L^p$ norm of its mean curvature immersed in such ambient manifolds, stablished by M.P. Cavalcante, H. Mirandola and F. Vitorio in \cite{Cavalcante} as a part of the proof of their Theorem B. We follow a similar vein of reasoning that these authors, but for different purposes: while they prove that the volume of the ends of the submanifold is infinite, we characterize the properness of the immersion (see Remarks \ref{infinite1} and \ref{infinite2} where we try to relate both facts).

In this regard, we have obtained the following theorem for submanifolds with bounded $L^p$-norm of their mean curvature vector, immersed in a Riemannian manifold with bounded geometry.

\begin{theorem}\label{Mainth0}
Let $\varphi:  P^m \longrightarrow N^n$ be an isometric immersion of a complete non-compact manifold $P^m$ in a manifold $N$ with bounded geometry. Assume that the mean curvature vector of $\varphi$ satisfies $\Vert \vec H_P\Vert_{L^p(P)} <\infty$, for one $m \leq p \leq \infty$.
\medskip

Then, $P^m$ is properly immersed in $N^n$ if and only if  \,$\Vol(\varphi^{-1}(B^N_t)) < \infty$ for all $t>0$, where $B^N_t$ denotes any geodesic ball of radius $t$ in the ambient manifold $N$.

\end{theorem}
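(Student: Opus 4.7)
The direction \emph{proper $\Rightarrow$ finite extrinsic volume} is immediate: if $\varphi$ is proper, then since $N$ is complete, Hopf--Rinow makes $\overline{B^N_t}$ compact; hence $\varphi^{-1}(\overline{B^N_t})$ is a compact subset of $P$, which bounds $\Vol(\varphi^{-1}(B^N_t))$.

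For the converse I argue by contradiction. Suppose every extrinsic ball has finite volume but $\varphi$ fails to be proper. Fix a basepoint $p_0 \in P$. Non-properness furnishes $R_0 > 0$ and a sequence $\{p_k\} \subset P$ with $d_P(p_0, p_k) \to \infty$ and $\varphi(p_k) \in \overline{B^N_{R_0}}$ for every $k$. The strategy is to manufacture infinitely many pairwise disjoint intrinsic balls in $P$, each of volume uniformly bounded below, all mapped by $\varphi$ into a single ambient geodesic ball; this will contradict the finite-volume hypothesis.

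The key input is the volume comparison for intrinsic geodesic balls in a submanifold with controlled $L^p$-mean curvature established by Cavalcante--Mirandola--Vitorio in \cite{Cavalcante}. Because $N$ has bounded geometry ($K_N \leq b$, $i_0(N) = i_0 > 0$) and $\|\vec H_P\|_{L^p(P)} < \infty$ for some $m \leq p \leq \infty$, their estimate yields constants $\rho_0 > 0$ and $c_0 > 0$, depending only on $m$, $p$, $b$, $i_0$ and the finite quantity $\|\vec H_P\|_{L^p(P)}$, such that
\begin{equation*}
\Vol\bigl(B^P_{\rho_0}(q)\bigr) \geq c_0 \quad \text{for every } q \in P.
\end{equation*}
Uniformity in $q$ is the whole point: the error term in the monotonicity formula at the fixed scale $\rho_0$ involves only $\|\vec H_P\|_{L^p(B^P_{\rho_0}(q))}$, which is dominated by the finite global norm, and the assumption $p \geq m$ is precisely what makes this error absorbable on every ball of radius $\rho_0$.

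Then pass to a subsequence $\{p_{k_j}\}$ with $d_P(p_0, p_{k_{j+1}}) > d_P(p_0, p_{k_j}) + 2\rho_0$, which is possible because $d_P(p_0, p_k) \to \infty$. The intrinsic balls $B^P_{\rho_0}(p_{k_j})$ are then pairwise disjoint, and because $\varphi$ is an isometric immersion and hence distance non-increasing, for every $q \in B^P_{\rho_0}(p_{k_j})$ one has $d_N(\varphi(q), \varphi(p_{k_j})) \leq d_P(q, p_{k_j}) < \rho_0$, so $\varphi(q) \in B^N_{R_0 + \rho_0}$. Summing,
\begin{equation*}
\Vol\bigl(\varphi^{-1}(B^N_{R_0 + \rho_0})\bigr) \;\geq\; \sum_{j} \Vol\bigl(B^P_{\rho_0}(p_{k_j})\bigr) \;\geq\; \sum_{j} c_0 \;=\; +\infty,
\end{equation*}
the desired contradiction. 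The principal obstacle of the argument is the uniform positive lower bound on the intrinsic volume of small balls about every point of $P$; once this is in hand, the disjointness/triangle-inequality bookkeeping is straightforward. That uniform bound is supplied by \cite{Cavalcante} under exactly our two structural hypotheses (bounded geometry in $N$ and $\|\vec H_P\|_{L^p(P)} < \infty$ with $p \geq m$), so the crux of the proof consists in identifying and correctly invoking their comparison rather than in producing any new analytic estimate.
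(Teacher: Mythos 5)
Your proposal is correct and follows essentially the same route as the paper: both directions use Hopf--Rinow for the easy implication, and for the converse both extract a divergent sequence in the preimage of a fixed ambient ball, invoke the Cavalcante--Mirandola--Vitorio lower bound on intrinsic ball volumes, and sum over pairwise disjoint intrinsic balls contained in a slightly larger extrinsic ball. The only minor imprecision is that the cited volume estimate holds for points outside a fixed large ball $B^P_{2\rho_0}(p_0)$ rather than for every $q\in P$, but since your sequence diverges in $P$ this does not affect the argument.
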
 

\begin{remark}\label{infinite1}
Applying Theorem B of \cite{Cavalcante} to an isometric immersion $\varphi:  P^m \longrightarrow N^n$
as in Theorem \ref{Mainth0} we have that $\Vol(P)= \infty$. In Remark \ref{infinite2} we shall extend this comment to the ends of $P$, as in Theorem B in \cite{Cavalcante}.
\end{remark}

As a corollary, we have immediately that CMC surfaces are properly immersed when inequality (\ref{hypovol}) is satisfied:

\begin{corollary}
Let $\varphi:  P^m \longrightarrow N^n$ be an isometric immersion of a complete non-compact manifold $P^m$ in a manifold $N$ with bounded geometry. Assume that the mean curvature vector of $\varphi$ has constant length.
\medskip

Then, $P^m$ is properly immersed in $N^n$ if and only if, for all $t>0$, the extrinsic ball $\varphi^{-1}(B^N_t)$ of radius $t$ has finite volume  $\Vol(\varphi^{-1}(B^N_t)) < \infty$.

\end{corollary}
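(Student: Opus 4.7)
The plan is simply to derive the corollary as a direct specialization of Theorem \ref{Mainth0} by choosing the endpoint exponent $p=\infty$. Specifically, if $\vec H_P$ has constant length, say $\Vert\vec H_P(x)\Vert=c$ for all $x\in P$, then by definition of the essential supremum we have
\[
\Vert\vec H_P\Vert_{L^\infty(P)}=c<\infty,
\]
so the hypothesis $\Vert\vec H_P\Vert_{L^p(P)}<\infty$ required in Theorem \ref{Mainth0} is satisfied for $p=\infty$, which certainly lies in the admissible range $m\le p\le\infty$.

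Once this verification is in place, the remaining geometric hypotheses of Theorem \ref{Mainth0} (complete non-compact $P^m$, ambient manifold $N^n$ with bounded geometry) are identical to those assumed in the statement of the corollary. Therefore Theorem \ref{Mainth0} applies verbatim and gives the equivalence between properness of $\varphi$ and the finiteness condition $\Vol(\varphi^{-1}(B^N_t))<\infty$ for every $t>0$.

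There is essentially no obstacle in this argument: the only small point to flag is the admissibility of $p=\infty$ in the $L^p$-hypothesis of the main theorem, which is built into the statement. The CMC case (scalar mean curvature identically a constant) is of course a particular instance where $c=|H|$ is that constant, and the minimal case $c=0$ is included as well. Thus the corollary follows in one line after noting that a constant-norm mean curvature vector belongs trivially to $L^\infty(P)$.
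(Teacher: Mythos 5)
Your proposal is correct and matches the paper's intent exactly: the paper presents this corollary as an immediate consequence of Theorem \ref{Mainth0}, obtained by taking $p=\infty$ and observing that a mean curvature vector of constant length has finite $L^\infty$-norm. Nothing further is needed.
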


%Also as a corollary of the proof of Theorem \ref{Mainth0}, we have the following result, proved previously by M.P. do Carmo, Q. Wang and C. Xia in \cite{DoCarmo1} and by M.P. Cavalcante, H. Mirandola and F. Vitorio in \cite{Cavalcante}:

%\begin{corollary}\label{volinf}
%Let $\varphi:  P^m \longrightarrow N^n$ be an isometric immersion of a complete non-compact manifold $P^m$ in a manifold $N$ with bounded geometry. Assume that the mean curvature vector of $\varphi$ satisfies $\Vert \vec H_P\Vert_{L^p(P)} <\infty$, for one $m \leq p \leq \infty$.
%Then
%$$\Vol(P)=\infty$$

%\end{corollary}

A more classical approach to the study of the relationship between geometric invariants and properness of an immersion $\varphi: P^m \rightarrow N^n$ encompasses bounds on the total extrinsic curvature $\int_P \Vert B^P\Vert^2 d\sigma$, where $B^P$ denotes the second fundamental form of the immersion. Note that when we consider minimal immersions of dimension two in the Euclidean space $\erre^n$, this total extrinsic curvature agrees with total absolute curvature $\int_P \vert K_P \vert d\sigma$, where $K_P$ denotes the Gaussian curvature of the surface $P$.

Concerning this relation among the total (extrinsic) curvature of an immersion and its properness, S. Muller and V. Sverak proved in \cite{MS} that a complete and oriented surface $\varphi: M^2 \rightarrow \erre^n$ with finite total extrinsic curvature $\int_M \Vert B^M\Vert^2 d\sigma < \infty$ is proper. 

Within this study of the behavior at infinity of complete and minimal immersions with finite total extrinsic curvature, it was also proven  by M. T. Anderson in \cite{anderson} and by G. de Oliveira  in \cite{O} that the immersion of a complete and minimal submanifold $P^m$ in $\erre^n$ or $\Han$ satisfying $\int_P \Vert B^P\Vert^m d\sigma <\infty$ is proper and that $P^m$ has finite topology.

At this point we should mention the results by G. P. Bessa et al. reported in \cite{Pac} and in \cite{Pac2}, where new conditions have been stated on the decay of the extrinsic curvature for a completely immersed submanifold $P^m$ in the Euclidean space (\cite{Pac}) and in a Cartan-Hadamard manifold (\cite{Pac2}), which guarantees the properness of the submanifold and the finiteness of its topology.

In the second of our main results we deal with the relation among total curvature and properness, proving that finiteness of total curvature is not a necessary condition for properness. In order to state the theorem, we recall that given a compact subset $D\subset M$ of a Riemannian manifold $M$, an \emph{end} $E$ $M$ with respect to $D$ is a connected unbounded component of $M\setminus D$. From now on, when we say that $E$ is an end, it is implicitly assumed that $E$ is an end with respect to some compact subset $D\subset M$ (see \cite{Li2000} and \cite{DoCarmo1}).

\begin{theorem}\label{Mainth1}
Let $\varphi: M^2 \longrightarrow N^n$ be a complete Riemannian $2$-manifold immersed in a complete Riemannian manifold $N^n$. Suppose that $M$ has finite topological type, and has non-positive Gaussian curvature, $K_M\leq 0$. Let us suppose moreover that for any $t >0$,
\begin{equation}
\Vol(\varphi^{-1}(B^N_t)) < \infty.
\end{equation}
If every end $E$ of $M$ satisfies $\int_E\vert K_M \vert d\sigma = \infty$ then $M^2$ is properly immersed in $N^n$.
\end{theorem}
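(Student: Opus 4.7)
The plan is to argue by contradiction: assume $\varphi$ is not proper, and show that this forces some end $E$ of $M$ to carry only finitely much total absolute curvature, contradicting the standing hypothesis.

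First I would localize the failure of properness to a single end. If $\varphi$ is not proper, there exist $t_{0}>0$ and an intrinsically divergent sequence $\{p_{n}\}\subset M$ with $\varphi(p_{n})\in B^{N}_{t_{0}}$. Since $M$ has finite topological type it has only finitely many ends, so after passing to a subsequence all $p_{n}$ lie in a common end $E$. Because $\varphi$ is isometric, it is $1$-Lipschitz, hence $B_{M}(p_{n},r)\subset\varphi^{-1}(B^{N}_{t_{0}+r})$ for every $r>0$; this preimage has finite area by assumption. If the injectivity radius $\mathrm{inj}_{M}(p_{n})\geq\rho>0$ held along a subsequence, I could refine to $d_{M}(p_{n},p_{m})>\rho$ for $n\neq m$, so that the balls $B_{M}(p_{n},\rho/2)$ would be pairwise disjoint; Günther's comparison theorem, available because $K_{M}\leq 0$, gives each of them area at least $\pi(\rho/2)^{2}$, contradicting $\mathrm{Vol}(\varphi^{-1}(B^{N}_{t_{0}+\rho/2}))<\infty$. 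Therefore $\mathrm{inj}_{M}(p_{n})\to 0$.

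Next I would convert collapsing injectivity radius into short essential loops. Since $K_{M}\leq 0$ there are no conjugate points, so $\mathrm{inj}_{M}(p)$ equals half the length of the shortest geodesic loop at $p$. Thus there are geodesic loops $\gamma_{n}$ at $p_{n}$ with $L(\gamma_{n})\to 0$. Each $\gamma_{n}$ must be homotopically essential: otherwise $\gamma_{n}=\partial D_{n}$ for a topological disk, and Gauss--Bonnet would give $2\pi-\theta_{n}=\int_{D_{n}}K_{M}\,d\sigma\leq 0$ with interior turning angle $\theta_{n}\in(-\pi,\pi)$, a contradiction.

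Finally, I would exploit the annular topology of $E$ (so $\pi_{1}(E)\cong\mathbb{Z}$): after a subsequence, all $\gamma_{n}$ represent powers of a single primitive class and, for $n$ large, exit every compact subset of $E$. Passing to the embedded representatives supplied by the intrinsic Meeks--Rosenberg analysis (Lemma~\ref{MeeksRosenberg}), consecutive loops $\gamma_{n},\gamma_{n+1}$ bound an annular region $A_{n}\subset E$, and Gauss--Bonnet on $A_{n}$ yields
\begin{equation*}
\Bigl|\int_{A_{n}}K_{M}\,d\sigma\Bigr|\leq |\theta_{n}|+|\theta_{n+1}|<2\pi,
\end{equation*}
with the right-hand side controlled in a summable way by $L(\gamma_{n})\to 0$. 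The union of the $A_{n}$'s exhausts the non-compact portion of $E$, so $\int_{E}|K_{M}|\,d\sigma<\infty$, contradicting the hypothesis. The hard part is precisely this last step: turning the mere existence of arbitrarily short essential geodesic loops into a genuine cusp decomposition of $E$ whose curvatures sum. One must verify that the $\gamma_{n}$ really escape to infinity in $E$ rather than accumulating on a limit loop, that they can be replaced by embedded curves bounding annuli in $E$, and that the base-point turning angles decay fast enough to give a summable series. This is where the nonpositive curvature and finite topology are used in an essential way, and where the intrinsic component of the Meeks--Rosenberg argument does the main work.
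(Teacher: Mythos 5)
Your proposal follows the same strategy as the paper's proof: argue by contradiction, localize a divergent sequence with bounded image to a single annular end, play a disjoint-geodesic-balls area estimate (via G\"unther's comparison, valid since $K_M\leq 0$) against the finiteness of $\Vol(\varphi^{-1}(B^N_t))$, and use the intrinsic Meeks--Rosenberg argument (Lemma \ref{MeeksRosenberg}, more precisely its localized form in Remark \ref{rem4.2}) to convert collapsing injectivity radius into finite total curvature of the end. The only structural difference is cosmetic: the paper applies the contrapositive of Lemma \ref{MeeksRosenberg} first (all ends of infinite total curvature forces $\I(M)=\delta>0$) and then derives the contradiction against the volume hypothesis, whereas you first derive $\I_M(p_n)\to 0$ from the volume hypothesis and then contradict the curvature hypothesis. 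The two ingredients are identical.

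The one step that needs repair is your final summation. You bound $\bigl\vert\int_{A_n}K_M\,d\sigma\bigr\vert<2\pi$ on each annulus between consecutive loops and then assert that the series is summable because the turning angles are ``controlled by $L(\gamma_n)\to 0$''. The exterior angles need not decay at all as the loops shrink, and summing a per-annulus bound of $2\pi$ over infinitely many annuli gives nothing; as stated this step fails. The correct mechanism, which is what the proof of Lemma \ref{MeeksRosenberg} actually uses, is to apply Gauss--Bonnet to the single compact annulus $E(n)$ bounded by the first loop and the $n$-th loop: since $\chi(E(n))=0$ and the boundary is geodesic except at the two basepoints, whose exterior angles lie in $(-\pi,\pi)$, one gets $\int_{E(n)}\vert K_M\vert\,d\sigma=-\int_{E(n)}K_M\,d\sigma<4\pi$ uniformly in $n$; since the basepoints diverge and the lengths shrink, these annuli exhaust $E$ up to a compact piece, and the uniform bound survives the limit. (Equivalently, your consecutive-annulus sums telescope: the two exterior angles at an intermediate vertex, seen from the two adjacent annuli, are $\pi-\alpha$ and $\alpha-\pi$ and cancel, so the partial sums are uniformly bounded rather than growing.) With that repair your argument coincides with the paper's.
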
 

\begin{remark}Observe that:

\begin{enumerate}\item This theorem can be stated on an end $E \subseteq M$ of a complete Riemannian $2$-manifold immersed in a complete Riemannian manifold $N^n$, $\varphi: M^2 \to N^n$, assuming that $K_M\leq 0$ on $E$, that $\Vol(\varphi^{-1}(B^N_t)\cap E) < \infty \,\,\forall t\, >\, 0$ and that  $\int_E\vert K_M \vert d\sigma = \infty$, and concluding that $E$ is properly immersed in $N$.

\item None of the three assumptions of this result, namely, finiteness of the volume of extrinsic balls, non-positive Gaussian curvature of the submanifold and infinite total curvature of its ends, can be avoided, as we can see in the following examples. 

In first place, we need to assume that the volume of the extrinsic balls $\varphi^{-1}(B^N_t)$ is finite for all radius $t>0$. An example of non-proper immersion with non-positive curvature, infinite total curvature and having extrinsic balls with infinite volume is Nadirashvili's minimal labyrinth $\varphi: M^2 \longrightarrow \erre^3$ (see \cite{Na}). This surface has positive first eigenvalue of the Laplacian, $\lambda_1(M) >0$, so it is hyperbolic, (see \cite{Gri}). Hence, it has infinite total curvature and infinite volume, (see \cite{I} and \cite{T}). On the other hand, we have that $\varphi(M) \subseteq B^{\erre^3}_1$, so $\Vol(\varphi^{-1}(B^{\erre^3}_1))=\Vol(M)=\infty$.

Other example of surfaces which do not satisfy the hypothesis on the volume of the extrinsic balls in our theorem are the helicoidal-Scherk examples constructed in \cite{RT}. They are simply connected minimal surfaces $\varphi: M_{n,h}^2 \to  \Hatwo \times \erre$ embedded in $\Hatwo \times \erre$ which are not properly embedded. They are invariant under discrete subgroups of isometries $G$ of $\Hatwo \times \erre$ generated by translations and screw motions, and if we consider  the quotient of $M_{n,h}^2$ by a subgroup of $G$, these surfaces have finite total curvature. However, the surfaces $M_{n,h}^2$ have infinite total curvature. They also have non-positive Gaussian curvature. These surfaces accumulates to a subset of a cylinder in $\Hatwo \times \erre$, so we have, fixing a pole $o \in \Hatwo \times \erre$ and using the argument given in the proof of Theorem \ref{Mainth0} (recall that  $M_{n,h}$ is minimal), a sequence of points diverging in $M_{n,h}$ and converging to a limit point in $\Hatwo \times \erre$. We can consider now a sequence of (disjoint) geodesic balls centered in these points. It is easy to see that the union $B$ of all these geodesic balls is included in some extrinsic ball $\varphi^{-1}(B^{\Hatwo\times\erre}_{R_0})$. We apply Lemma \ref{volu} to conclude that $\Vol(\varphi^{-1}(B^{\Hatwo\times\erre}_{R_0})) \geq \Vol(B) = \infty$.

Secondly, all the ends of the submanifold must have non-finite  total absolute curvature. Let us consider a complete and non-compact Riemannian manifold $M$ with dimension two, with constant Gaussian curvature $-1$ and finite volume $\Vol(M) < \infty$. As we mentioned above, a surface like this can be isometrically immersed (via Nash immersion theorem) in a metric ball $B^N_1$ in $\erre^N$ for large $N$. Obviously, is not properly immersed, it has negative curvature $-1$ and if we consider any end $E \subseteq M$, we have that $\int_E \vert K_M\vert d\sigma \leq \int_E \vert K_M \vert d\sigma =\Vol(M) <\infty$.

Finally, and concerning the assumption of non-positiveness of the Gaussian curvature, we can consider the computational example given by a modified pseudoesphere $S^2$, isometrically immersed in $\erre^3$, given by the following parametrization 
\begin{equation}\label{psedoesfere}
\left\{\begin{aligned}
x=&2\,\big(1+{\rm sech}\left(u\right)\cos\left(v\right)\big)\cdot \cos\left(u-\tanh\left(u\right)\right)\\
y=&2\,\big(1+{\rm sech}(u)\cos(v)\big)\cdot\sin\left(u-\tanh\left(u\right)\right)\\
z=&2\,{\rm sech}(u)\sin(v)
\end{aligned}\right.
\end{equation}
for $(u,v)\in [0,\infty)\times[0,2\pi)$. The two-dimensional volume of the extrinsic balls of $S^2$ is finite because, as can be showed numerically, the total two-dimensional volume $\Vol(S^2)$ of $S^2$ is finite, $\Vol(S^2)\approx 25.509$. Moreover, its total curvature is infinite. This immersion is not proper because $S^2$ accumulates to a circle (see figure \ref{figura}), but we must notice that $S^2$ has points where its Gaussian curvature is positive.
\end{enumerate}
\end{remark}

\begin{figure}
\begin{center}
\includegraphics[width=0.30\textwidth]{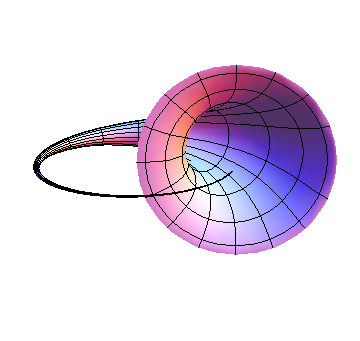}\quad\includegraphics[width=0.32\textwidth]{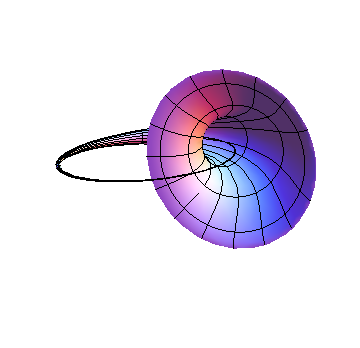}
\end{center}
\caption{Modified pseudoesphere given by parametrization (\ref{psedoesfere}). In the left side $u\in [0,10]$, and in the right $u\in[0,15]$. This surface has finite total area, and hence, finite area of its extrinsic balls, and it has infinite total curvature, $\int_{S^2}\vert K\vert d\sigma=\infty$. But this immersion is not proper because the image accumulates in the circle of radius $2$ in the plane $\{(x,y,z)\in\erre^3\,:\,z=0\}$. This example shows that the hypothesis $K_M\leq 0$ is needed in order to state theorem \ref{Mainth1}.}\label{figura}
\end{figure}

To prove Theorem \ref{Mainth1} we use Lemma \ref{MeeksRosenberg}, which is an intrinsic version 
of the proof, given in \cite{Mer}, that the injectivity radius $\I(M)$ of a complete  embedded minimal surface $M$ in $\erre^3$ of finite topology is positive. In this paper the authors proved that, if we assume that $\I(M)=0$, then there exists an end of the surface $E \subseteq M$ that has finite absolute total curvature.

A corollary of Theorem \ref{Mainth1} and the fact that every $2$-dimensional manifold with finite total absolute curvature is parabolic (see \cite{I}) is following:

\begin{corollary}\label{ParabCor}
Let $\varphi: M^2 \longrightarrow N^n$ be a complete Riemannian $2$-manifold immersed in a complete Riemannian manifold $N^n$. Suppose that $M$ has finite topological type, non-positive Gaussian curvature, $K_M\leq 0$, and that all its ends are hyperbolic. Let us suppose moreover that for any $t>0$,
\begin{equation}
\Vol(\varphi^{-1}(B^N_t)) < \infty.
\end{equation}
Then, $M^2$ is properly immersed in $N^n$.
\end{corollary}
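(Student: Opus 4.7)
The plan is to deduce the Corollary from Theorem~\ref{Mainth1} by verifying the only hypothesis of that theorem which is not already explicit in the statement of the Corollary, namely that every end $E$ of $M$ satisfies $\int_E |K_M|\, d\sigma = \infty$. The three remaining hypotheses---finite topological type, $K_M \leq 0$, and $\Vol(\varphi^{-1}(B^N_t)) < \infty$ for all $t>0$---are already assumed.

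To establish the total-curvature condition I would argue by contradiction. Suppose some end $E_0$ of $M$ has $\int_{E_0} |K_M|\, d\sigma < \infty$. The result cited from \cite{I} immediately before the Corollary asserts that every complete $2$-dimensional Riemannian manifold with finite total absolute curvature is parabolic; the analogous statement for a single end follows by applying Huber's classical theorem, which identifies a finite-total-curvature end conformally with a punctured disk, a surface that is parabolic at its puncture. Hence $E_0$ would be parabolic, contradicting the hypothesis that every end of $M$ is hyperbolic. Therefore every end $E$ of $M$ must satisfy $\int_E |K_M|\, d\sigma = \infty$.

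With this condition in place, all hypotheses of Theorem~\ref{Mainth1} are fulfilled, and it yields at once that $\varphi: M^2 \to N^n$ is proper. (One can equivalently work end-by-end using the end-version of Theorem~\ref{Mainth1} described in the first item of the Remark following it.) The only slightly delicate point is the passage from the global parabolicity criterion of \cite{I} to its end version, but this should be routine: parabolicity of an end is a conformal invariant at infinity, and a finite-total-curvature end is conformally a punctured disk, on which no nonconstant positive superharmonic function exists near the puncture. No additional computation is required.
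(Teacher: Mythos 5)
Your proposal is correct and follows exactly the route the paper intends: the hypothesis that every end is hyperbolic, combined with Ichihara's result that an end of finite total absolute curvature is parabolic (which the paper itself invokes in its end-version in Section \S.6), forces $\int_E |K_M|\,d\sigma = \infty$ for every end, so Theorem \ref{Mainth1} applies directly.
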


We have adapted, on the other hand, the techniques used in Lemma \ref{MeeksRosenberg}, to relate the curvature and the topology of a complete and non-compact $2$-Riemannian manifold, using of the notion of {\em geodesic ray} and the study of the focal points of the transverse Jacobi fields to a submanifold. In particular, we have explored the relation between the minimal focal distance of a geodesic ray and the total curvature of an end containing that geodesic ray.

Before to state our last main result, we present some definitions. Recall that, given a Riemannian manifold $(M^n,g)$, a geodesic $\gamma:[0,\infty)\to M$ emanating from $p\in M$ parametrized by arc-length is called a \emph{ray} emanating from $p$ if 
$$\text{dist}_M\left(\gamma(t),\gamma(s)\right)=\vert t- s\vert$$ for all $t,s\geq 0$ (see for instance \cite{Sakai}). Whenever one has a geodesic ray $\gamma$ in a surface $M$, one can construct, for any $t>0$, the subset $\Omega_\gamma^t$  of the normal bundle $T\gamma^{\bot}$ of $\gamma$ in $M$ defined by
\begin{equation}
\Omega_\gamma^t:=\left\{(p,\xi)\in T\gamma^\bot\, :\, \vert \xi \vert <t \right\}.
\end{equation}
Denoting by $\exp^\bot$ the restriction of the exponential map to the normal bundle one can define the \emph{minimal focal distance} $\Minf(\gamma)$ of the geodesic ray $\gamma$ as (see also \S \ref{Prelim} and \cite{Tubes}) the maximum $t>0$ such that the map 
\begin{equation}
\exp^\bot : \Omega_\gamma^t\to \exp^\bot(\Omega_\gamma^t)\subset M 
\end{equation} 
is a diffeomorphism.

 Finally, we say that a geodesic ray $\gamma$ emanating from $p\in M$ belongs to the end $E$ (or equivalently, that $E$ contains the ray $\gamma$), when $\gamma(t)\in E$ for any $t$ large enough. At this point, we recall that an end can be also defined as an equivalent class of cofinal rays, (two rays in $M$, $\alpha(t)$ and $\beta(t)$ are {\em cofinal} iff for every compact set $K \subseteq  M$, there exists $t_0$ such that if $t_1, t_2 \geq t_0$, then $\alpha(t_1), \beta(t_2)$ lie in the same connected component of  $ M - K$). 
 %Then, given a geodesic ray $\gamma$ and when $M$ is a surface with finite topology, there exists an annular end $E_{\gamma}$ that contains $\gamma$, (see \cite{Li2000} and \cite{Cao}).

We have obtained the following intrinsic result:

\begin{theorem}\label{Mainth2}
Let $M^2$  be a complete, non compact and orientable surface with finite topology and non-positive Gaussian curvature, $K_M\leq 0$ . Suppose that there exist a geodesic ray  $\gamma \subseteq M$  with zero minimal focal distance,  $\Minf(\gamma)=0$. Then $\I(M) =0$ and, moreover,  $\gamma$ belongs to  an end $E_\gamma \subset M$  of finite total curvature, $\int_{E_\gamma} \vert K_M \vert d\sigma< \infty$.
\end{theorem}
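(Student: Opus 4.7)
The plan is to use the curvature sign to reduce $\Minf(\gamma)=0$ to a pure injectivity failure of $\exp^{\perp}$, extract from this short geodesic configurations near infinity that force $\I(M)=0$ with all witnessing loops living in a single end, and then conclude via the intrinsic Meeks--Rosenberg Lemma \ref{MeeksRosenberg}. In dimension two, a focal Jacobi field $J$ along a geodesic normal to $\gamma$ has scalar form $J(t)=f(t)W(t)$ with $W$ parallel and $f$ satisfying $f''+K_M f=0$, $f(0)=1$, $f'(0)=0$ (since $\gamma$ is itself a geodesic its shape operator as a submanifold vanishes). With $K_M\le 0$ one has $f''\ge 0$, so $f\ge 1$ for every $t\ge 0$ and no focal points occur along any normal geodesic from $\gamma$. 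Consequently $\exp^{\perp}\colon T\gamma^{\perp}\to M$ is nowhere critical, and $\Minf(\gamma)=0$ must come from injectivity failure at arbitrarily small normal radii: for every $n$ there exist distinct pairs $(p_n,\xi_n),(q_n,\eta_n)\in\Omega_{\gamma}^{1/n}$ with $\exp^{\perp}(p_n,\xi_n)=\exp^{\perp}(q_n,\eta_n)=:z_n$.

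Concatenating the two normal segments gives a broken geodesic of length less than $2/n$ from $p_n$ to $q_n$. Writing $p_n=\gamma(t_n)$, $q_n=\gamma(s_n)$ and using that $\gamma$ is a ray, one has $|t_n-s_n|=d_M(p_n,q_n)<2/n$, so when $p_n\ne q_n$ this concatenation, completed by the arc of $\gamma$ from $p_n$ to $q_n$, is a piecewise geodesic loop at $p_n$ of length less than $4/n$, while when $p_n=q_n$ it is already such a loop. Either way two distinct tangent vectors of length $O(1/n)$ based at $p_n$ are identified by $\exp_{p_n}$, forcing $\I(p_n)=O(1/n)$ and hence $\I(M)=0$. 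Moreover the parameters $t_n$ cannot stay bounded: a subsequential interior limit $p_{\infty}$ of $(p_n)$ would lie in some normal neighborhood on which $\exp_{p_{\infty}}$ is a diffeomorphism, yielding a uniform positive lower bound for the length of loops meeting that neighborhood, in contradiction with $\I(p_n)\to 0$. Therefore $t_n\to\infty$, and the finite topology of $M$ ensures that $\gamma$ ultimately enters a unique end $E_{\gamma}$ which also contains the loops for all sufficiently large $n$.

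To finish one applies Lemma \ref{MeeksRosenberg} to the end $E_{\gamma}$ together with its sequence of short loops at infinity: since $E_{\gamma}$ is an end of a complete surface with $K_M\le 0$ and finite topology that admits arbitrarily short loops escaping to infinity, the lemma supplies the desired bound $\int_{E_{\gamma}}|K_M|\,d\sigma<\infty$ through a Gauss--Bonnet-type summation over annular regions dictated by the short loops. The main obstacle is precisely this last step, that is, translating the short-loop data at infinity into a convergent Gauss--Bonnet estimate on $E_{\gamma}$; the geometric picture guiding the argument is that a non-positively curved end of finite topology whose injectivity radius collapses at infinity behaves asymptotically like a hyperbolic cusp (finite area and finite total curvature) rather than a flare, and this dichotomy is exactly what Lemma \ref{MeeksRosenberg} makes quantitative.
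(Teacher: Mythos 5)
Your reduction of $\Minf(\gamma)=0$ to an injectivity failure of $\exp^{\bot}$ at arbitrarily small normal radii is sound and is exactly the paper's route (the absence of focal points is the paper's Proposition \ref{Hadamard}), and the final appeal to Lemma \ref{MeeksRosenberg} (in the form of Remark \ref{rem4.2}) is also the intended conclusion, not an open obstacle as your last paragraph suggests. The genuine gap is in the middle: the step from ``there is a piecewise geodesic loop through $p_n$ of length less than $4/n$'' to ``$\I_M(p_n)=O(1/n)$.'' Your justification --- that ``two distinct tangent vectors of length $O(1/n)$ based at $p_n$ are identified by $\exp_{p_n}$'' --- is not what the injectivity failure of $\exp^{\bot}$ gives you: the two coincident normal segments emanate from two \emph{different} feet $p_n\neq q_n$ on $\gamma$, so nothing is identified by the single pointed map $\exp_{p_n}$. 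More fundamentally, the mere existence of a short broken-geodesic loop at a point never bounds the injectivity radius: every point of every surface admits arbitrarily short null-homotopic loops, and in a Cartan--Hadamard surface (where $K\leq 0$ and $\I\equiv\infty$) your construction would be vacuous only because $\exp^{\bot}$ of a complete geodesic is globally injective there --- a topological fact you must actually use.

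The missing ingredient, which is the heart of the paper's proof, is to show that the loop $T_n$ formed by the two minimizing normal segments and the arc of $\gamma$ between their feet \emph{cannot bound a disk}. This follows from Gauss--Bonnet: both segments meet $\gamma$ orthogonally (they minimize distance to $\gamma$), so the external angles at the two feet are $\pm\pi/2$, and the angle at the common endpoint is not a cusp (otherwise the two segments would coincide); hence the sum of external angles is strictly less than $2\pi$, and a bounded disk $\Omega$ would satisfy $\int_{\Omega}K_M\,d\sigma>2\pi\chi(\Omega)-2\pi=0$, contradicting $K_M\leq 0$. Combining this with the containment $T_n\subseteq B^M_{4L_n}(p_n)$ and the Jordan--Schoenflies theorem (a Jordan curve inside an embedded disk bounds a disk) yields $\I_M(p_n)\leq 4L_n\to 0$, which is what feeds Remark \ref{rem4.2}. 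Your divergence argument for the $p_n$ should likewise be rephrased: it follows from the continuity and local positivity of $e_c$ (equivalently, of the injectivity radius function), not from a ``uniform lower bound on the length of loops meeting a neighborhood,'' which again fails for contractible loops.
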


We shall finish this introduction with an example:

\begin{example}\label{warped}

\medskip

When $M$ has negative curvature, we can use \cite[Corollary 8.6]{Tubes} to see that if $\Vol(M) <\infty$, then every ray $\gamma$ has $\Minf(\gamma)=0$ (and hence by using theorem \ref{Mainth2}, every end of $M$ has finite total curvature). However, there exist manifolds with negative curvature satisfying that every ray $\gamma$ has $\Minf(\gamma)=0$ and with $\Vol(M) =\infty$, as it is easy to check considering the $2$-Riemannian manifold $\Sigma=\erre\times_w\mathbb{S}^1$  with warping metric $g_\Sigma=dt^2+w^2(t)d\theta^2=dt^2+\text{sinh}\left(-t+\sqrt{t^2+1}\right)^2d\theta^2$. 

Observe that $\Sigma$ has finite topological type. Indeed, $\Sigma$ has the topological type of $\erre\times\mathbb{S}^1$ and, therefore $\Sigma$ has zero genus and two ends. Given $\epsilon>0$, let us consider the compact domain $\Omega_\epsilon\subset \Sigma$  given by $\Omega_\epsilon=[-\epsilon,\epsilon ]\times \mathbb{S}^1$. $\Sigma$ has, hence, two ends with respect to $\Omega_\epsilon$, namely:
\begin{equation}
\begin{aligned}
E_{-}&:=(-\infty,-\epsilon)\times \mathbb{S}^1\\
E_{+}&:=(\epsilon, +\infty)\times \mathbb{S}^1.
\end{aligned}
\end{equation}
Both of the above ends are of infinite area $2\pi \int_{\epsilon}^{\infty}\text{sinh}\left(-t+\sqrt{t^2+1}\right)dt=\infty$, and, since 
\begin{equation}
\lim_{t\to \infty} \text{sinh}\left(-t+\sqrt{t^2+1}\right)=0,
\end{equation}
the end $E_{+}$ is an shrinking end and hence every line in $\Sigma$ has zero minimal focal distance.
In fact, $E_{+}$ has finite total curvature, as it can be explicitly checked:
\begin{equation}
\begin{aligned}
\int_{E_{+}}\vert K_\Sigma\vert dV&=\int_{E_{+}} \omega''(t) dt d\theta\\= 2\pi& \int_{\epsilon}^{\infty} \left(\left(y'(t)\right)^2\text{sinh}(y(t))+y''(t)\text{cosh}(y(t))\right)dt
\approx 1.54308\\ &\text{when $\epsilon$ goes to zero}.
\end{aligned}
\end{equation}

\end{example}

%%%%%%%%%%%%%%%%%
% \subsection{Outline}
%%%%%%%%%%%%%%%%%
\subsection{Outline}
In section \S .2 we present some basic tools, based in the Jacobi index theory, we use: first, a key comparison result for the Laplacian of the extrinsic distance in a submanifold. Secondly, and to define the notion of {\em tube} around a submanifold immersed in a Riemannian manifold, we present the transverse Jacobi fields to a submanifold and the normal exponential map in relation of the minimal focal distance of this submanifold. In Section \S.3 we state and prove Theorem \ref{Mainth0}. In Section \S.4, we prove Theorem \ref{Mainth1}. Finally, we shall prove Theorem \ref{Mainth2} in Section \S.5. In Section \S.6 we have studied some consequences of Theorem \ref{Mainth2}.

%%%%%%%%%%%%%%%%%
% \subsection{Acknowledgement}
%%%%%%%%%%%%%%%%%
\subsection{Acknowledgments}

We would like to thank useful conversations with A. Alarc\'on, A. Hurtado, L. Jorge, F. L\'opez, and J. P\'erez. 

%\subsection{Acknowledgement}
%The authors  wish to thank the referee for his/her useful suggestions.
%%%%%%%%%%%%%%%%%%%%%%%%%%%%%%%
%  Section: Preliminaries
%%%%%%%%%%%%%%%%%%%%%%%

\section{Preliminaries}\label{Prelim}
%%%%%%%%%%%%%%%%%%%%%%%%
\subsection{Extrinsic distance balls}\label{comp}
%%%%%%%%%%%%%%%
Throughout the paper we assume that $\varphi: P^m \rightarrow N^n$ is an isometric immersion of a complete non-compact Riemannian $m$-manifold $P^m$ into a complete Riemannian manifold $N^n$ with bounded geometry. We shall refer $P$ as a {\em submanifold} of $N$. 

Given a point $o \in N$, for every $x \in N^{n}-Cut \{o\}$ we
define $r_o(x) = \dist_{N}(o, x)$, and this
distance is realized by the length of a unique
geodesic from $o$ to $x$, which is the {\it
radial geodesic from $o$}. 

When $\varphi^{-1}(N-Cut \{o\}) \neq \emptyset$, we also denote by $r$
the composition
 $$r_o\circ \varphi:\varphi^{-1}(N-Cut \{o\})\subseteq  P\to \erre_{+} \cup
\{0\}$$ 

\noindent This composition is called the
{\em{extrinsic distance function}} from $o$ in
$P^m$. The gradients of $r_o$ in $N$ and $r$ in  $P$ are
denoted by $\nabla^N r_o$ and $\nabla^P r$,
respectively. Then we have
the following basic relation, by virtue of the identification, given any point $x\in P$, between the tangent vectors $X \in T_xP$ and $\varphi_{*_{x}}(X) \in T_{\varphi(x)}N$
\begin{equation}\label{radiality}
\nabla^N r_o = \nabla^P r +(\nabla^N r_o)^\bot 
\end{equation}
where $(\nabla^N r_o)^\bot(\varphi(x))$ is perpendicular to
$T_{x}P$ for all $x\in P$.

\begin{definition}\label{ExtBall}
Given $\varphi: P^m \longrightarrow N^n$ an isometric immersion of a complete and connected Riemannian $m$-manifold $P^m$ into a complete Riemannian manifold $N^n$ with bounded geometry, we denote the {\em extrinsic metric balls} of radius $t >0$, ($t \leq min\{ i_0(N), \frac{\pi}{\sqrt{b}}\}$) and center $o \in N$ by
$D_t(o)$. They are defined as  the subset of $P$:
$$
D_t(o):=\Big\{x\in P\, :\, r\left(x\right)< t\Big\}=\varphi^{-1}\left(B^N_{t}(o)\right)
$$
where $B^N_{t}(o)$ denotes the open geodesic ball
of radius $t$ centered at the point $o$ in
$N^{n}$. Note that the set $\varphi^{-1}(o)$ can be the empty set.

The extrinsic balls are the sublevel sets of the extrinsic distance function.
\end{definition}

%We shall make use of the following comparison for the Laplacian of the extrinsic distance functioninequality of a submanifold immersed into an ambient Riemannian manifold with bounded sectional curvatures.
%\begin{theorem}[\cite{GreW, JK}] \label{corLapComp}
%Let $\varphi:  P^m \longrightarrow N^n$ be a complete immersed manifold $P$ in a Riemannian ambient manifold $N^n$ with sectional curvatures bounded from above by $K_N \leq b$, ($b \in \erre^+$). When $\varphi^{-1}(N-Cut \{o\}) \neq \emptyset$, denote by $r$ the extrinsic distance function with respect to the point $o\in N$, {\em i.e.},
%$$
%r: \varphi^{-1}(N-Cut \{o\})  \subseteq P\to\erre,\quad x\to r(x)=\dist_N\left(o,\varphi(x)\right)
%$$
%Then, for any smooth and non-decreasing function $f:\erre\to\erre$,  $(f' \geq 0)$, the Laplacian $\Delta^{P}(f \circ r)$ on $P$ of the function $f \circ r$ is bounded from below by
%\begin{equation} \label{eqLap1}
%\begin{aligned}
%\Delta^{P}(f \circ r) \, \geq \, &\left(\,  f''\circ r - (f'\circ r) h_b(r)\, \right)\Vert \nabla^{P} r \Vert^{2} 
%\\ &+ m\left(f'\circ r\right) \left(\, h_b(r) +
%\langle \, \nabla^{N}r, \, \vec H_{P}  \, \rangle  \,  \right),
%\end{aligned}
%\end{equation}
%where $\vec H_{P}$ denotes the mean curvature vector of $P$ in $N$ and $h_b(r)=\sqrt{b} cot \sqrt{b} r$ is the mean curvature of the geodesic balls in the sphere with constant sectional curvature $b$, $S^n(b)$.
%\end{theorem}

%%%%%%%%%%%%%%%%%%%%%%%%
\subsection{Transverse Jacobi fields and the normal exponential map to a submanifold}
%%%%%%%%%%%%%%%
Let $\varphi: P^m \rightarrow N^n$ be an isometric embedding of a complete non-compact Riemannian $m$-manifold $P^m$ in a Riemannian manifold $N$. Let us identify $P$ with the zero section $O(TP^\bot)$ of the normal bundle $TP^\bot$ of $P$ in $N$. Then, (cfr. \cite{Sakai}, \cite{Tubes}), we have that $\exp_{TP^\bot_{*(p,0)}} : T_{(p,0)}TP^\bot \rightarrow T_pN$ is an isomorphism for each $p \in P$, because for all $p \in P$,  we have the decomposition $T_pN=T_pP \oplus T_pP^\bot=T_{(p,0)}TP^\bot$. Then, applying the inverse mapping theorem, we have that $\exp_{TP^\bot} : TP^\bot \rightarrow N$ is a diffeomorphism if it is restricted to an open neighborhood $\Omega_P$ of $P$ in $TP^\bot$. We also shall denote $\exp_{TP^\bot}$ as $\exp^{\bot}$. Note that for any point $q \in \exp_{TP^\bot}(\Omega_P)$ there exists a unique minimal geodesic 
$\gamma$ parametrized by arc length from a point of $P$ to $q$ which realizes the distance $\dist_N(q,P)$. This geodesic $\gamma$ is perpendicular to $P$.

\begin{definition} (See \cite{Tubes}).
Let $\gamma_{\xi}(t)$ in $N$ a geodesic normal to $P$ at $t=0$. The point $q=\gamma_{\xi}(t_0)$ is a {\em cut-focal point} along $\gamma_{\xi}$ provided the distance from $q$ to $P$ is no longer minimized along $\gamma_{\xi}$ after $q$. In other words, beyond the point $q$, there is a point in the geodesic $\tilde q \in \gamma_{\xi}$ such that the distance of $\tilde q$ to $P$ is less than the distance of $q$ to $P$, and it is realized by another geodesic $\tilde \gamma_{\xi}$ that contains $\tilde q$ and meets $P$ orthogonally. 
\end{definition}

\begin{definition}
Given the unitary normal bundle $\mathbb{S}TP^\bot$ to $P$ in $N$, \emph{i.e.},
$$
\mathbb{S}TP^\bot:=\left\{(p, \xi)\in TP^\bot \, : \,  \Vert \xi\Vert =1\right\}.
$$
One has the following function $e_c : \mathbb{S}TP^\bot \to \erre$ defined as 
$$
e_c(p,\xi):= \sup_{t >0}\left\{t \, :\, \text{dist}_N(\exp^\bot(p,t\xi), P)=t\right\}.
$$
\end{definition}

\begin{remark}[See \cite{Tubes}]\label{difeo}The set $\Omega_P$ where $\exp^\bot$ is a diffeomorphism can be written as
$$
\Omega_P=\left\{(p,t\xi) \in \mathbb{S}TP^\bot\, :\, 0\leq t < e_c(p,\xi)\right\}.
$$ 
\end{remark}

\begin{definition}[See \cite{Tubes}]The {\em minimal focal} distance of $P$ in $N$ is defined as
$$ 
\Minf(P):= \inf_{(p,\xi) \in \mathbb{S}TP^\bot} \left\{e_c(p,\xi)\right\}.
$$
\end{definition}
\begin{remark}
When $P$  is a point $p$, the minimal focal distance coincides with the injectivity radius at $p$.
\end{remark}
\begin{definition}
Given $\varphi: P^m \longrightarrow N^n$ a submanifold of a Riemannian manifold $N$, and a number  $t\leq \Minf(P)$. The {\em tube of radius} $t$ about $P$ in $N$ is the set
\begin{equation*}
%\begin{aligned}
  T_t(P):=\left\{ x \in N\, :\, \text{dist}_N(x, P) \leq t \right\}.
%\end{aligned}
\end{equation*}
\end{definition}
\begin{remark}
Since $t \leq \Minf(P)$ in the above definition and by virtue of Remark \ref{difeo}, 
\begin{equation*}
T_t(P)= \left\{ x \in N\, :\, \text{{\small $ \exists$ a geodesic $\gamma$ starting at $x$, length $L(\gamma) \leq t$ meeting $P$ orthogonally}}\right\}
\end{equation*}
\noindent and moreover, 
\begin{equation*}
T_t(P)=\exp_{TP^\bot}\Big(\left\{(p, \xi)\in TP^\bot, \text{ with }\, \Vert \xi\Vert \leq t\right\}\Big) \subseteq  \exp_{TP^\bot}\left(\Omega_P\right).
\end{equation*}

\end{remark}

\begin{definition}[See \cite{Sakai,Tubes}] Let $\gamma_{\xi}(t)$ be a geodesic in $N$ normal to $P$ at $t=0$. The point $q=\gamma_{\xi}(t_0)$ is a {\em focal point} along $\gamma_{\xi}$ if there exists a nonzero $P$-Jacobi field $Y(t)$ along $\gamma_{\xi}$ such that $Y(t_0)=0$. We call $t_0$ its {\em focal value}. Given $p \in P$ and $\xi \in \mathbb{S}TP^\bot$, let us denote as $e_f(p, \xi)$ the first of these focal values along the normal geodesic starting at $p$.
\end{definition}

\begin{remark}[See {\cite[Lemma 4.8 in Chapter II]{Sakai}}]
The point $q=\gamma_{\xi}(t_0)$ is a focal point along $\gamma_{\xi}$ if and only if rank $\exp^\bot_{*(t_0\xi)} <m$. Then, 
$$
e_f(p, \xi) =\inf_{t >0}\left\{t\,:\, \text{rank}\left(\exp^\bot_{*(t\xi)}\right) <m\right\}.
$$
\end{remark}

\begin{remark}[See {\cite[Lemma 2.11 in Chapter III]{Sakai}}]
Given $(p,\xi) \in \mathbb{S}TP^\bot$, the relation among focal points and cut-focal points along the geodesic normal to $P$, $\gamma_{\xi}(t)$, is given by:
\begin{equation}
e_c(p,\xi) \leq e_f(p,\xi)
\end{equation}
\end{remark}
The following Proposition shows that, when $P$ is totally geodesic in $N$, and $N$ has non-positive sectional curvatures, the normal exponential map is a local diffeomorphism, as occurs with the exponential map.
\begin{proposition}\label{Hadamard}
There are not focal points along a normal geodesic to a totally geodesic submanifold $P$ in a manifold of non-positive curvature $N$.
\end{proposition}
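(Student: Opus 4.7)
The plan is a convexity argument on the squared norm $f(t)=\langle Y(t),Y(t)\rangle$ of an arbitrary nonzero $P$-Jacobi field $Y$ along a normal geodesic $\gamma_{\xi}$ to $P$. I want to rule out the existence of a focal value $t_0>0$, i.e.\ to show $Y(t)\neq 0$ for all $t>0$.

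First, I would record the two ingredients that the two hypotheses give. \emph{Non-positive curvature} of $N$ plus the Jacobi equation $Y''+R(Y,\gamma_{\xi}')\gamma_{\xi}'=0$ yields
\begin{equation*}
f''(t)=2\|Y'(t)\|^{2}-2\langle R(Y,\gamma_{\xi}')\gamma_{\xi}',Y\rangle\;\geq\;2\|Y'(t)\|^{2}\;\geq\;0,
\end{equation*}
so $f$ is convex. \emph{Total geodesicity} of $P$ makes the shape operator of $P$ vanish, so the $P$-Jacobi boundary conditions reduce to $Y(0)\in T_{p}P$ and $Y'(0)\in T_{p}P^{\perp}$, from which $f'(0)=2\langle Y'(0),Y(0)\rangle=0$.

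Next I would split into two cases. If $Y(0)\neq 0$, then $f(0)>0$, $f'(0)=0$ and $f$ convex give $f(t)\geq f(0)>0$ for all $t\geq 0$, so no focal point appears. If $Y(0)=0$, nontriviality of $Y$ and ODE uniqueness force $Y'(0)\neq 0$; integrating $f''\geq 2\|Y'\|^{2}$ yields $f'(t)\geq 2\int_{0}^{t}\|Y'(s)\|^{2}\,ds$, and if this integral vanished on some interval $(0,t_{0})$ we would have $Y'\equiv 0$ there, hence $Y\equiv 0$ on $(0,t_{0})$, contradicting uniqueness for the Jacobi ODE. Therefore $f'(t)>0$ for all $t>0$, $f$ is strictly increasing on $(0,\infty)$, and again $Y(t)\neq 0$ for every $t>0$.

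The only delicate point is the case $Y(0)=0$, because a naive appeal to convexity only gives $f(t)\geq f(0)=0$; the strict positivity has to be extracted from the inequality $f''\geq 2\|Y'\|^{2}$ together with the uniqueness of solutions of the Jacobi equation, as above. Everything else is routine bookkeeping about the $P$-Jacobi boundary data once the shape operator is known to vanish.
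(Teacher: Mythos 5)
Your proof is correct and follows essentially the same route as the paper: convexity of $F(t)=\langle Y(t),Y(t)\rangle$ forced by $K_N\le 0$, combined with $F'(0)=2\langle B^P(Y(0),Y(0)),\xi\rangle=0$ from total geodesicity. If anything, your two-case split at $t=0$ is a bit more careful than the paper's version, which asserts $Y'(0)\neq 0$ and $F''(0)>0$ outright and concludes via a Taylor expansion plus monotonicity of $F'$; your handling of the case $Y(0)\neq 0$ with $Y'(0)=0$ by pure convexity, and of the case $Y(0)=0$ by integrating $F''\ge 2\|Y'\|^2$ against ODE uniqueness, covers all initial data cleanly.
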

\begin{proof}
The proof is based on the convexity of the norm of Jacobi fields. Let $\gamma_{\xi}(t)$ be a geodesic  in $N$ normal to $P$ at $t=0$ (namely, $\gamma_{\xi}(0)=p \in P$ and $\gamma_{\xi}'(0)=\xi \in T_pP^\bot$). Let us suppose that $q=\gamma_{\xi}(t_0)$ is a focal point along $\gamma_{\xi}$, \emph{i.e.}, there exist a nonzero $P$-Jacobi field $Y(t)$ along $\gamma_{\xi}$ such that $Y(t_0)=0$. Then, $Y(0) \in T_pP$ and $Y'(0)-A_{\xi}(Y(0))\in T_pP^\bot$, being $A_{\xi}$ the Weingarten map of $P$.

\noindent Let us define $F(t)=\langle Y(t), Y(t)\rangle$. Then, $F'(t)= 2\langle Y'(t), Y(t)\rangle$ and 
\begin{equation}\label{ineqF}
\begin{aligned}
F''(t)=& 2 \Vert Y'(t)\Vert^2 +2\langle Y''(t), Y(t)\rangle\\
=&2 \Vert Y'(t)\Vert^2 -2K\left(\gamma_{\xi}'(t), Y(t)\right)\,\Vert \gamma_{\xi}'(t) \wedge Y(t)\Vert^2\\
 \geq & 0,
\end{aligned}
\end{equation}
because $K\left(\gamma_{\xi}'(t), Y(t)\right) \leq 0\,\,\forall t $ \,\,and  
\begin{equation}
\begin{aligned}
\langle Y''(t), Y(t)\rangle &=-\Big\langle R\left(Y\left(t\right),\gamma_{\xi}'\left(t\right)\right)\gamma_{\xi}'(t),\, Y(t)\Big\rangle\\
&=-K\left(\gamma_{\xi}'\left(t\right), Y\left(t\right)\right)\cdot\left\Vert \gamma_{\xi}'\left(t\right) \wedge Y(t)\right\Vert^2.
\end{aligned}
\end{equation}

On the other hand, $F(0)=\Vert Y(0)\Vert^2 \geq 0$ and $F(t_0) =\Vert Y(t_0)\Vert^2 = 0$. We are going to compute $F'(0)$. Since $Y'(0)-A_{\xi}(Y(0))= Z(p)\in T_pP^\bot$ and $Y(0) \in T_pP$ ,
\begin{equation}
\begin{aligned}
F'(0)=& 2\left\langle Y'\left(0\right), Y\left(0\right)\right\rangle=2\left\langle A_{\xi}\left(Y(0)\right),\, Y(0)\right\rangle\\
=&2\left\langle B^P\left(Y(0),Y(0)\right),\, \xi\right\rangle
\end{aligned}
\end{equation}

\noindent But $P$ is totally geodesic, so $F'(0)=0$. Moreover, using inequality (\ref{ineqF}) and  since $Y'(0)\neq 0$ (because $Y(t)$ is a nonzero vector field, see \cite[Chap. 5, Corollary 2.5 and Remark 2.6]{DoCarmo2}) then $F''(0) >0$.

\noindent Hence, for $t>0$ sufficiently small, $$F(t)=F(0)+F'(0) t+ \frac{F''(0)}{2}t^2+ O(t^3) > F(0) \geq 0$$
Since $F''(t) \geq 0$, then $F'(t_2) \geq F'(t_1)$ when $t_2 \geq t_1$. Therefore $F(t) >0$ for all $t>0$.
\end{proof}

%%%%%%%%%%%%%%%%%%%%%%%%
\subsection{The volume of a tube around a curve}
%%%%%%%%%%%%%%%

Let us consider a curve $\sigma \subseteq N$ as a submanifold of the ambient Riemnnian manifold $N$, using the inclusion map $i: \sigma \longrightarrow N$. We have the following result, which is proved in \cite{Tubes}, 

\begin{theorem}[See {\cite[Corollary 8.6]{Tubes}}]
Let $\sigma$ be a curve with finite length $L(\sigma)$ in a Riemannian manifold $M^n$. Then, if $K_M \leq 0$, we have, for  $0 \leq r < \Minf(\sigma)$, that
\begin{equation}
\Vol(T_r(\sigma)) \geq \frac{(\pi r^2)^{\frac{n-1}{2}}}{(\frac{1}{2}(n-1))!} L(\sigma).
\end{equation}
\end{theorem}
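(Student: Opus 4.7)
The strategy is to compute $\Vol(T_r(\sigma))$ by pulling it back through the normal exponential map and then to bound the resulting Jacobian below by its Euclidean value, using the non-positivity of the curvature.

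Setup. Since $r < \Minf(\sigma)$, Remark \ref{difeo} applied with $P = \sigma$ says that $\exp^\perp$ is a diffeomorphism from the open $r$-disk bundle of $T\sigma^\bot$ onto $T_r(\sigma)$. Introduce Fermi coordinates $(s,t,\xi)$, where $s \in [0,L(\sigma)]$ is the arc-length parameter along $\sigma$, $t \in (0,r)$ is the normal distance, and $\xi$ is a unit vector in $T_{\sigma(s)}\sigma^\bot$. The change of variables formula yields
\begin{equation*}
\Vol(T_r(\sigma)) \;=\; \int_0^{L(\sigma)}\!\!\int_0^r\!\int_{\mathbb{S}^{n-2}} \theta(s,t,\xi)\, t^{n-2}\, d\xi\, dt\, ds,
\end{equation*}
where $\theta(s,t,\xi)\, t^{n-2} = \|Y_0(t)\wedge Y_1(t) \wedge \cdots \wedge Y_{n-2}(t)\|$ and the $Y_i$'s are the Jacobi fields along the normal geodesic $\gamma_{s,\xi}(t) = \exp_{\sigma(s)}(t\xi)$ arising from the partial derivatives of $\exp^\perp$ in the coordinates $s$ and $e_1,\dots,e_{n-2}$, where $\{e_1,\dots,e_{n-2}\}$ is an orthonormal basis of $\xi^\bot \cap T_{\sigma(s)}\sigma^\bot$. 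Explicitly, $Y_0$ is the $\sigma$-Jacobi field with $Y_0(0) = \sigma'(s)$ and $Y_0'(0) = -A_\xi(\sigma'(s))$ (longitudinal variation), and $Y_i$ ($i \geq 1$) is the Jacobi field with $Y_i(0)=0$ and $Y_i'(0)=e_i$ (angular variation).

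Comparison. The hypothesis $K_M \leq 0$ is now used to force every Jacobi field to spread at least as fast as in $\erre^n$. For the transverse fields $Y_1,\dots,Y_{n-2}$, which vanish at $t = 0$, Rauch's comparison theorem gives $\|Y_i(t)\| \geq t$, and the initial orthogonality between the $Y_i'(0)$'s propagates to a flat-space lower bound for the wedge, so $\|Y_1(t)\wedge\cdots\wedge Y_{n-2}(t)\| \geq t^{n-2}$. For the longitudinal field $Y_0$, the same computation as in the proof of Proposition \ref{Hadamard} shows that $F(t) = \|Y_0(t)\|^2$ is convex with $F(0) = 1$, so $\|Y_0(t)\| \geq 1$. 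Combined with the initial orthogonality $Y_0(0) \in T_{\sigma(s)}\sigma \perp T_{\sigma(s)}\sigma^\bot \ni Y_i(0)$ and a Gram-determinant comparison (which propagates this $t=0$ orthogonality to a lower bound at all $t>0$), one obtains
\begin{equation*}
\|Y_0(t)\wedge Y_1(t)\wedge\cdots\wedge Y_{n-2}(t)\| \;\geq\; t^{n-2}, \qquad 0 \leq t < r.
\end{equation*}

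Conclusion. Substituting this pointwise lower bound in the volume formula,
\begin{equation*}
\Vol(T_r(\sigma)) \;\geq\; L(\sigma)\int_0^r\!\int_{\mathbb{S}^{n-2}} t^{n-2}\, d\xi\, dt \;=\; L(\sigma)\cdot \Vol(B^{n-1}_r) \;=\; \frac{(\pi r^2)^{(n-1)/2}}{((n-1)/2)!}\, L(\sigma),
\end{equation*}
which is the desired bound. The main analytical obstacle is the wedge estimate: the transverse part is a direct application of Rauch, but the longitudinal field $Y_0$ has nonzero initial derivative $-A_\xi\sigma'(s)$ (the geodesic curvature of $\sigma$ contracted with $\xi$), which may tilt $Y_0(t)$ away from its initial direction and thereby potentially shrink its component orthogonal to $\mathrm{span}\{Y_1(t),\dots,Y_{n-2}(t)\}$. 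Controlling this tilt via the index-form comparison and the convexity of $\|Y_0\|^2$ is the heart of the proof, and is carried out in detail in Chapter 8 of \cite{Tubes}.
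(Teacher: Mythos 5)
The paper itself gives no proof of this statement---it is quoted directly from \cite[Corollary 8.6]{Tubes}---so there is no internal argument to compare with; your Fermi-coordinate/Jacobi-field strategy is indeed the one Gray uses. However, your central estimate contains a genuine gap: the pointwise bound $\Vert Y_0(t)\wedge Y_1(t)\wedge\cdots\wedge Y_{n-2}(t)\Vert\geq t^{n-2}$ is false for a non-geodesic curve, already in $\erre^3$. For a circle with curvature vector $k$ the Jacobian of $\exp^\bot$ at normal distance $t$ in the unit normal direction $\xi$ equals $\bigl(1-t\langle k,\xi\rangle\bigr)t$, which is strictly smaller than $t$ on the concave side. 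Your convexity argument does not rescue this: with $F(t)=\Vert Y_0(t)\Vert^2$ one has $F(0)=1$ but, exactly as in the computation of Proposition \ref{Hadamard}, $F'(0)=2\langle B^\sigma(\sigma',\sigma'),\xi\rangle$, which is an odd function of $\xi$ and hence negative for half of the normal directions whenever $\sigma$ is not a geodesic; a convex function with $F(0)=1$ and $F'(0)<0$ dips below $1$. (For the only application made in the paper, Corollary \ref{voltubray}, $\sigma$ is a geodesic, $A_\xi=0$, and your pointwise bound is correct --- but the theorem is stated for arbitrary curves of finite length.)

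The missing idea is Hotelling's cancellation. What the hypothesis $K_M\leq 0$ actually yields (via the focal Rauch/Berger comparison for $\sigma$-Jacobi fields, valid up to the first focal point and hence on $0\leq t<r<\Minf(\sigma)$) is that the Jacobian dominates its Euclidean counterpart with the \emph{same initial conditions}, namely
\begin{equation*}
\theta(s,t,\xi)\,t^{n-2}\;\geq\;\bigl(1-t\langle k(s),\xi\rangle\bigr)\,t^{n-2},
\end{equation*}
not the geodesic value $t^{n-2}$. The theorem nevertheless follows because the offending term is linear, hence odd, in $\xi$ and integrates to zero over the normal sphere:
\begin{equation*}
\int_{\mathbb{S}^{n-2}}\bigl(1-t\langle k(s),\xi\rangle\bigr)\,t^{n-2}\,d\xi\;=\;\Vol(\mathbb{S}^{n-2})\,t^{n-2};
\end{equation*}
equivalently, one pairs $\xi$ with $-\xi$ and bounds the sum of the two Jacobians below by $2t^{n-2}$. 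After this averaging step your final integration goes through verbatim and gives $L(\sigma)\cdot\Vol(B^{n-1}_r)$. The ``tilt'' of $Y_0$ that you flag at the end is therefore not a technical difficulty to be controlled pointwise---it genuinely destroys the pointwise bound---and the proof cannot be closed without averaging over the fiber $\mathbb{S}^{n-2}$ first.
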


\begin{remark}
Note that $\frac{(\pi r^2)^{\frac{n-1}{2}}}{(\frac{1}{2}(n-1))!} L(\sigma)$ corresponds to the volume of the tube of radius $r$ around a curve with length $L(\sigma)$ in $\erre^n$.
\end{remark}

\begin{corollary}\label{voltubray}
Let $\gamma$ be a geodesic ray on a complete manifold $M$ with non-positive curvature $K_M\leq 0$. Suppose that $\Minf(\gamma)>0$, then for any $0 <r<\Minf(\gamma)$,
$$
\Vol(T_r(\gamma))=\infty.
$$
\end{corollary}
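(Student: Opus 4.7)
The strategy is to exhaust $\gamma$ by compact subarcs and apply the preceding theorem (Corollary 8.6 of \cite{Tubes}) to each. For every positive integer $k$, set $\sigma_k := \gamma\vert_{[0,k]}$, a compact curve of length $L(\sigma_k)=k$. Since $\sigma_k \subseteq \gamma$ as subsets of $M$, the distance-based definition of the tube gives immediately $T_r(\sigma_k)\subseteq T_r(\gamma)$, so that $\Vol(T_r(\gamma)) \geq \Vol(T_r(\sigma_k))$.

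The first step is to transfer the hypothesis $r < \Minf(\gamma)$ to each subarc; namely, to show $\Minf(\sigma_k) \geq \Minf(\gamma)$ at interior points. Fix $(p,\xi) \in \mathbb{S}T\sigma_k^\bot$ with $p$ in the interior of $\sigma_k$, hence automatically $(p,\xi) \in \mathbb{S}T\gamma^\bot$. For any $0 < t < \Minf(\gamma)$, the equality $\text{dist}_M(\exp^\bot(p,t\xi),\gamma) = t$ is realized uniquely by the normal geodesic $s \mapsto \exp_p(s\xi)$; in particular the nearest point on $\gamma$ is $p$ itself, which belongs to $\sigma_k$. Consequently $\text{dist}_M(\exp^\bot(p,t\xi),\sigma_k) = t$ as well, giving $e_c^{\sigma_k}(p,\xi) \geq \Minf(\gamma) > r$. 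The endpoint contribution to the infimum over $\mathbb{S}T\sigma_k^\bot$ can be handled by shrinking slightly to the interior subarc $\gamma\vert_{[1/k,\,k-1/k]}$ (of length $k-2/k$), on which the same lower bound holds and the limit $k\to\infty$ is unaffected.

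Applying the quoted theorem to each such subarc yields
\begin{equation*}
\Vol(T_r(\gamma)) \;\geq\; \Vol(T_r(\sigma_k)) \;\geq\; \frac{(\pi r^2)^{\frac{n-1}{2}}}{\bigl(\tfrac{1}{2}(n-1)\bigr)!}\,L(\sigma_k) \;=\; \frac{(\pi r^2)^{\frac{n-1}{2}}}{\bigl(\tfrac{1}{2}(n-1)\bigr)!}\,k \;\longrightarrow\; \infty
\end{equation*}
as $k\to\infty$, and the conclusion $\Vol(T_r(\gamma))=\infty$ follows.

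The main obstacle is the endpoint-handling sketched in the second paragraph: one needs to check that passing from the ray $\gamma$ to a subarc with boundary does not create new cut-focal issues inside the tube of radius $r$. A cleaner alternative that sidesteps this entirely is to reprove Corollary 8.6 of \cite{Tubes} directly on $\gamma$: since $\exp^\bot\colon \Omega_\gamma^r \to T_r(\gamma)$ is a diffeomorphism whenever $r<\Minf(\gamma)$, one integrates the Jacobian of $\exp^\bot$ on $\Omega_\gamma^r$; the hypothesis $K_M\leq 0$, via the convexity of the norms of transverse Jacobi fields along normal geodesics (as in the proof of Proposition \ref{Hadamard}), bounds this Jacobian from below by the corresponding Euclidean density, and integrating the constant-in-$s$ fiber volume $\Vol(B^{n-1}_r)$ along the infinite arc-length parameter of $\gamma$ produces the infinite volume directly.
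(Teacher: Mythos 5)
Your proof is correct and follows essentially the same route as the paper: exhaust the ray by finite-length subarcs, check that $r<\Minf(\gamma)$ transfers to each subarc, and apply Gray's tube-volume lower bound to each piece. The only difference is that you use the nested exhaustion $\sigma_k=\gamma\vert_{[0,k]}$ together with the monotonicity $T_r(\sigma_k)\subseteq T_r(\gamma)$, whereas the paper partitions $\gamma$ into disjoint segments $\gamma_i$ and writes $\Vol(T_r(\gamma))=\sum_i\Vol(T_r(\gamma_i))$ --- an equality that is not literally true since tubes of adjacent segments overlap --- so your version, including the explicit verification that the cut-focal distance does not decrease when passing to a subcurve, is in fact the tighter of the two.
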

\begin{proof}
Let us consider a partition of $\gamma=\cup_{i=1}^{\infty} \gamma_i$, where $\gamma_i$ is a geodesic segment with finite length $L(\gamma_i)$. As $K_M\leq 0$, one has that, as a consequence of Hotelling's tube formula, (see also \cite[Corollary 8.6]{Tubes} and the above theorem), that for any $0 <r<\Minf(\gamma)$,
\begin{equation*}
\Vol(T_r(\gamma_i)) \geq \frac{(\pi r^2)^{\frac{n-1}{2}}}{(\frac{1}{2}(n-1))!} L(\gamma_i)\,\,\,\forall i=1,...,\infty
\end{equation*}

Then, since $T_r(\gamma)=\cup_{i=1}^{\infty}T_r(\gamma_i)$, we have
\begin{equation}
\begin{aligned}
\Vol(T_r(\gamma))= \sum_{i=1}^{\infty} \Vol(T_r(\gamma_i))\geq \frac{(\pi r^2)^{\frac{n-1}{2}}}{(\frac{1}{2}(n-1))!} \sum_{i=1}^{\infty} L(\gamma_i)=\infty
\end{aligned}
\end{equation}\end{proof}

%%%%%%%%%%%%%%%%%%%%%%%%%%%%%%%%%%
\section{Proof of Theorem \ref{Mainth0}\label{Proof1}}
%%%%%%%%%%%%%%%%%%%%%%%%%%%%%%%%%%

In this section we shall prove the first of the main results of this paper.

In order to help the reader, we shall present again the statement of the result:

\begin{theorem*}
Let $\varphi:  P^m \longrightarrow N^n$ be an isometric immersion of a complete non-compact manifold $P^m$ in a manifold $N$ with bounded geometry. Assume that the mean curvature vector of $\varphi$ satisfies $\Vert \vec H_P\Vert_{L^p(P)} <\infty$, for one $m \leq p \leq \infty$.

Then, $P^m$ is properly immersed in $N^n$ if and only if $\Vol(\varphi^{-1}(B^N_t)) < \infty$ for all $t>0$, where $B^N_t$ denotes any geodesic ball of radius $t$ in the ambient manifold $N$.
\end{theorem*}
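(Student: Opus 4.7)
The plan is to prove the two implications separately, with the nontrivial content living in the direction ``finite extrinsic volumes $\Rightarrow$ properness''. The direct implication is soft: since $N$ is complete, Hopf--Rinow makes every closed geodesic ball $\overline{B^N_t}$ compact, so properness of $\varphi$ forces $\varphi^{-1}(\overline{B^N_t})$ to be compact in $P$ and hence of finite volume, whence $\varphi^{-1}(B^N_t)\subset\varphi^{-1}(\overline{B^N_t})$ also has finite volume.

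For the converse I would argue by contradiction. Assume every extrinsic ball has finite volume but $\varphi$ is not proper. Then some compact $K\subset N$ has non-compact preimage, producing a sequence $\{x_k\}\subset P$ with no convergent subsequence in $P$ but with $\varphi(x_k)\in K$. Since $P$ is complete, Hopf--Rinow in $P$ forces $\dist_P(x_k,x_0)\to\infty$ for a fixed base point $x_0$ (after a subsequence), and a further subsequence guarantees $\varphi(x_k)\to p\in N$, so that $\varphi(x_k)\in B^N_{R/2}(p)$ for some $R>0$ and all $k$ large.

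The core ingredient is the intrinsic volume comparison of Cavalcante, Mirandola and Vitorio cited in the paper: for an immersion in an ambient of bounded geometry with $\Vert \vec H_P\Vert_{L^p(P)}<\infty$ for some $m\leq p\leq\infty$, there exist constants $\rho_0>0$ and $c>0$ such that, whenever the local tail $\Vert \vec H_P\Vert_{L^p(B^P_{\rho_0}(x))}$ is sufficiently small, the intrinsic geodesic ball satisfies $\Vol\bigl(B^P_{\rho_0}(x)\bigr)\geq c\,\rho_0^{\,m}$. Global finiteness of $\Vert \vec H_P\Vert_{L^p(P)}$ forces the local $L^p$-tails along any eventually disjoint subfamily to tend to zero, so a further subsequence makes this bound applicable uniformly at every $x_k$. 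Using $\dist_P(x_k,x_0)\to\infty$, a standard greedy extraction produces a subsequence with $\dist_P(x_k,x_l)>2\rho_0$ for $k\neq l$, so the intrinsic balls $B^P_{\rho_0}(x_k)$ are pairwise disjoint.

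To close, since $\varphi$ is isometric one has $\dist_N(\varphi(x),\varphi(y))\leq \dist_P(x,y)$, hence
\[
\varphi\bigl(B^P_{\rho_0}(x_k)\bigr)\subset B^N_{\rho_0}(\varphi(x_k))\subset B^N_{\rho_0+R/2}(p)
\]
for all $k$ large. Setting $T:=\rho_0+R/2$, summing over the disjoint family gives
\[
\Vol\bigl(\varphi^{-1}(B^N_T(p))\bigr)\geq \sum_k \Vol\bigl(B^P_{\rho_0}(x_k)\bigr)\geq \sum_k c\,\rho_0^{\,m}=+\infty,
\]
contradicting the hypothesis. The main obstacle is isolating the uniform intrinsic lower volume bound above: for $p=\infty$ it reduces to standard monotonicity plus bounded geometry, while for the critical exponent $p=m$ one needs precisely the Cavalcante--Mirandola--Vitorio refinement, in which control of $|\vec H_P|^p$ on a small ball is exactly what permits a Michael--Simon--type monotonicity formula to close at small radii.
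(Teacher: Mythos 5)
Your proposal is correct and follows essentially the same route as the paper: the easy direction via Hopf--Rinow, and the converse by contradiction, extracting a $P$-divergent sequence with images accumulating in $N$, invoking the Cavalcante--Mirandola--Vitorio lower volume bound for intrinsic balls far from a base point, and summing over a pairwise disjoint family of such balls contained in a single extrinsic ball to contradict the finite-volume hypothesis. The only cosmetic difference is your packaging of the key lemma in terms of small local $L^p$-tails of $\vec H_P$, where the paper simply quotes the bound as valid outside a large intrinsic ball $B^P_{2\rho_0}(p_0)$.
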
 
\begin{proof} 

First of all we shall need the following estimate for the volume of the geodesic balls of a submanifold with bounded mean curvature immersed in  a Riemannian manifold with bounded geometry given in the proof of Theorem B in the paper \cite{Cavalcante}:

\begin{lemma}[See Theorem B in  \cite{Cavalcante}]\label{volu}
Let $\varphi:  P^m \longrightarrow N^n$ be an isometric immersion of a complete non-compact manifold $P^m$ in a manifold $N$ with bounded geometry. Assume that the mean curvature vector of $\varphi$ satisfies $\Vert \vec H_P\Vert_{L^p(P)} <\infty$, for one $m \leq p \leq \infty$.

 Then, given a fixed $p_0 \in P$ there exists  $\rho_0$ sufficiently large and a positive constant  $K >0$ such that, for all $p \in P -B^P_{2\rho_0}(p_0)$, and for all $\rho \leq \rho_0$, we have
\begin{equation}\label{ineq1}
\Vol(B^P_{\rho}(p))\geq \frac{1}{2Km}\rho^m .
\end{equation}
\end{lemma}

Now, to prove the result, let us suppose first that  the immersion is proper. As $N$ is complete, then $\varphi^{-1}(B^N_t)$ is compact for all $t>0$ and hence, $\Vol(\varphi^{-1}(B^N_t)) < \infty$.

On the other hand, if we suppose that the immersion is not proper, then there exists a point $o \in N$ and geodesic $R_0$- ball in $N$, $\bar B^N_{R_0}(o)$, with compact closure, such that $\varphi^{-1}(\bar B^N_{R_0}(o))$ is not compact. As it is closed, then $\varphi^{-1}(\bar B^N_{R_0}(o))$ is not bounded in $P$. Therefore, let us consider $\{p_i\}_{i=1}^{\infty}$ a sequence on points in $\varphi^{-1}(\bar B^N_{R_0}(o))$, which diverges in $P$ and such that $\{\varphi(p_i) \}_{i=1}^\infty$ converges to a limit point $p \in N$. 

Given $p_0$ the first point of the sequence, and applying Lemma \ref{volu}, there exists  $\rho_0$ sufficiently large and a positive constant  $K >0$ such that, for all $p \in P -B^P_{2\rho_0}(p_0)$, and for all $\rho \leq \rho_0$, we have
\begin{equation}\label{ineq1}
\Vol(B^P_{\rho}(p))\geq \frac{1}{2Km}\rho^m .
\end{equation}

On the other hand, as the sequence $\{p_i\}_{i=1}^{\infty}$ diverges in $P$, we know that $\{p_i\}_{i=n_0}^{\infty} \subseteq P -B^P_{2\rho_0}(p_0)$  for some $n_0 \in \ene$.

Let us consider now, given the quantity $\rho_0$, a subsequence of $\{p_n\}_{n=1}^{\infty}$ formed by points $p_n$ satisfying $\dist_P(p_n,p_{n+1}) >  \rho_0 \,\,\forall n \in \ene$, (see Remark \ref{infinite2}). Now, we shall consider the geodesic balls $B^P_{\rho_0/2}(p_n)$.

With these radius, the balls $B^P_{\rho_0/2}(p_n)$ are pairwise disjoint. Moreover, we have that, by Lemma \ref{volu}, $\Vol(B^P_{\rho_0/2}(p_n)) \geq \frac{1}{2Km}(\frac{\rho_0}{2})^m$ for all $n \geq n_0$.

 Let us see that  $\cup_{n=n_0}^{\infty} B^P_{\rho_0/2}(p_n) \subseteq D_{R_0+\rho_0/2}(o)=\varphi^{-1}(B^N_{R_0+ \rho_0/2}(o))$. For that, let us consider $x \in \cup_{n=n_0}^{\infty} B^P_{\rho_0/2}(p_n)$. Then, there exists $n_1 \geq n_0$ such that  $x \in  B^P_{\rho_0/2}(p_{n_1})$, so, as $p_{n_1} \in \varphi^{-1}(\bar B^N_{R_0}(o))$, we have
\begin{equation}
\begin{aligned}
\dist_N\left(\varphi(x),o\right) \leq &\dist_N\left(\varphi(x), \varphi(p_{n_1})\right) + \dist_N(\varphi(p_{n_1}), o) \\ \leq & \dist_P\left(x, p_{n_1}\right)+ R_0\\ \leq& \rho_0/2+ R_0
\end{aligned}
\end{equation}
Then, as we know, for all $n\geq n_0$, that $\Vol\left(B^P_{\rho_0/2}(p_n )\right) \geq \frac{1}{2Km}(\frac{\rho_0}{2})^m>0$,  then

\begin{equation}
\begin{aligned}
\Vol( D_{R_0+\rho_0/2} ) &\geq \Vol\left(\bigcup_{n=n_0}^\infty B^P_{\rho_0/2}(p_n)\right) \\ & \geq \sum_{n=n_0}^{\infty} \Vol\left(B^P_{\rho_0/2}(p_n)\right) \geq \sum_{n=n_0}^{\infty} \frac{1}{2Km}(\frac{\rho_0}{2})^m =\infty
\end{aligned}
\end{equation}
\noindent which is a contradiction with the hypothesis.
\end{proof}
\begin{remark}\label{infinite2}
When we consider $\{q_i\}_{i=1}^{\infty}$ a sequence on points  which diverges in $P$, (in the sense that $\lim_{n \to \infty} d_P(q_1, q_n) =\infty$), and given a fixed point $p_0 \in P$ and  the radius $\rho_0$ as in Lemma \ref{volu} it is always possible to choose a subsequence  $\{q_k\}_{k=1}^{\infty} \subseteq P$ and one radius $\delta=d_P(q_1,q_2)$ such that $B^P_{\delta}(q_k) \cap B^P_{\delta}(q_{k'}) = \emptyset$ for all sufficiently large $k \neq k'$ and such that, for all $k \geq k_0$, $B^P_{\delta} (q_k) \subseteq P - B^P_{2\rho_0}(p_0) $ in such a way that
$$\Vol(P) \geq \Vol\left(\bigcup_{k=k_0}^\infty B^P_{\delta}(q_k)\right)  \geq \sum_{k=k_0}^{\infty} \Vol\left(B^P_{\delta}(q_k)\right) \geq \sum_{k=k_0}^{\infty} \frac{1}{2Km}\delta^m =\infty$$

In particular, given a point $p_0 \in P$  and  the radius $\rho_0$ as in Lemma \ref{volu}, if we consider, as in the paper \cite{Cavalcante}, an end $E\subseteq P$ with respect the ball $B^P_{2\rho_0}(p_0)$, we can choose a sequence of points $q_2,q_3,...$ in $P$ such that 
$$q_k \in E \cap (B^P_{2k\rho_0}(p_0) -B^P_{(2k-1)\rho_0}(p_0))$$
Then, $\lim_{k \to \infty} d_P(q_2, q_k)=\infty$. Moreover, for all $0<R \leq \rho_0$ and for all $k \geq 2$, $B^P_R(q_k) \subseteq E-B^P_{2k\rho_0}(p_0))$, because $d_P(q_k, p_0) > (2k-1)\rho_0>2\rho_0$, and $B^P_{R}(q_k) \cap B^P_{R}(q_{k'}) = \emptyset$. Hence, fixing a radius $0<R \leq \rho_0$,
\begin{equation*}
\begin{aligned}
\Vol(E) &\geq \Vol(E- B^P_{2\rho_0}(p_0)) \geq \Vol\left(\bigcup_{k=2}^\infty B^P_{R}(q_k)\right)  \\& \geq \sum_{k=2}^{\infty} \Vol\left(B^P_{R}(q_k)\right) \geq \sum_{k=2}^{\infty} \frac{1}{2Km}R^m =\infty
\end{aligned}
\end{equation*}

\end{remark}
%%%%%%%%%%%%%%%%%%%%%

\section{Proof of Theorem \ref{Mainth1}}\label{Proof main}

In this section we shall prove Theorem \ref{Mainth1}. In order to help the reader, and as in Section \S.3, we shall present again the statement of the results.

%%%%%%%%%%%%%%%%%%%%%
\subsection{A previous lemma: on the positiveness of the injectivity radius}
%%%%%%%%%%%%%%%%%%%%%%%

In the paper \cite{Mer} it is proved that the injectivity radius $\I(M)$ of a complete  embedded minimal surface $M$ in $\erre^3$ of finite topology is positive, by showing that, if we assume that $\I(M)=0$, then there exists an end of the surface $E \subseteq M$ that has finite absolute total curvature so as $E$ is complete and embedded, it is asymptotic to a half catenoid or the end of a plane and the injectivity radius of $M$ restricted to $E$ is bounded away from zero, which is a contradiction and hence $\I(M)>0$. From an intrinsic point of view, we can state this result as follows proving it with exactly the same argument that in \cite{Mer}:

\begin{lemma}\label{MeeksRosenberg}(See \cite{Mer})
Let $M$ be a complete and non-compact 2-Riemannian manifold with finite topology, non-positive curvature $K_M \leq 0$ and with zero injectivity radius $\I(M)=0$. Then, there exists and end $E \subseteq M$ such that $\int_E \vert K_M\vert d\sigma  < \infty$.
\end{lemma}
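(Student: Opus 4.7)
The plan is to extract from $\I(M)=0$ a sequence of arbitrarily short closed geodesics escaping to infinity inside a single end of $M$ and then apply Gauss--Bonnet to annular regions they cobound with a fixed reference curve. First, since $M$ has finite topology it is homeomorphic to a compact surface with finitely many punctures, hence has finitely many ends, each homeomorphic to $\mathbb{S}^1\times[0,\infty)$. The function $p\mapsto\I(p)$ is continuous and strictly positive on every compact subset of $M$, so any sequence $\{p_n\}\subset M$ with $\I(p_n)\to 0$ must leave every compact set; a pigeonhole argument then provides, after extraction, an end $E\subset M$ containing every $p_n$. At each $p_n$ the hypothesis furnishes a geodesic loop $\ell_n$ based at $p_n$ with $L(\ell_n)\leq 2\I(p_n)\to 0$; since $K_M\leq 0$, the Cartan--Hadamard theorem in the universal cover forbids contractibility, so $\ell_n$ defines a non-trivial element of $\pi_1(E)\cong\mathbb{Z}$. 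The existence theorem for closed geodesics in free homotopy classes on non-positively curved manifolds then yields, for each large $n$, a closed geodesic $\gamma_n$ in the free class of $\ell_n$ with $L(\gamma_n)\leq L(\ell_n)\to 0$.

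The principal technical obstacle, where the intrinsic version requires more care than the original Meeks--Rosenberg argument (which leaned on minimality), is to upgrade these $\gamma_n$ to a sequence of \emph{simple} closed geodesics each representing a \emph{generator} of $\pi_1(E)$ and to show that they diverge to infinity inside $E$. I would handle this through the two-dimensional thin-tube analysis around $\gamma_n$: Jacobi field comparisons in non-positive curvature furnish an embedded tubular annular neighbourhood of $\gamma_n$ whose width grows unboundedly as $L(\gamma_n)\to 0$; disjointness of such tubes for sufficiently short geodesics (a Margulis-type conclusion that in dimension two admits a direct proof on the tube itself by Gauss--Bonnet) then forces the $\gamma_n$ to diverge inside $E$, while primitivity follows because a non-primitive short closed geodesic cannot fit inside its own thin tube.

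Granted this, fix a smooth simple closed curve $\alpha\subset E$ generating $\pi_1(E)$ and lying near the inner (compact) boundary of $E$. For $n$ large enough, $\gamma_n$ and $\alpha$ cobound a topological annulus $A_n\subset E$, so $\chi(A_n)=0$, and Gauss--Bonnet on $A_n$ reads
\begin{equation*}
\int_{A_n}K_M\,d\sigma+\int_\alpha k_g\,ds+\int_{\gamma_n}k_g\,ds=0.
\end{equation*}
Since $\gamma_n$ is a geodesic the last term vanishes; as $K_M\leq 0$ this gives
\begin{equation*}
\int_{A_n}\vert K_M\vert\,d\sigma=\int_\alpha k_g\,ds=:C,
\end{equation*}
a finite constant independent of $n$. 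Because the $A_n$ exhaust $E$ outside a relatively compact piece, monotone convergence yields $\int_E\vert K_M\vert\,d\sigma\leq C+\int_{E\setminus\bigcup_n A_n}\vert K_M\vert\,d\sigma<\infty$, which proves the lemma.
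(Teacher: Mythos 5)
Your overall strategy---push a sequence of short non-contractible loops out to infinity inside a single end and control the total curvature of the annuli they cobound via Gauss--Bonnet---is the same as the paper's, and your opening steps are correct: finitely many annular ends, divergence of any sequence with $\I_M(p_n)\to 0$, and the existence at $p_n$ of a geodesic loop $\ell_n$ of length $2\I_M(p_n)$ that is non-contractible because $K_M\leq 0$ excludes conjugate points. The genuine gap is the passage from the geodesic loops $\ell_n$ to smooth closed geodesics $\gamma_n$ in their free homotopy classes. On a non-compact surface the length infimum in a free homotopy class need not be attained, and the failure occurs precisely in the model situation of this lemma: in a cusp of a finite-area hyperbolic surface the loops encircling the cusp have lengths tending to $0$, the infimum of length in their class is $0$, and the class contains \emph{no} closed geodesic at all. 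Since such a cusp is exactly an end with $\I(M)=0$ and finite total curvature, your argument collapses on the very examples the lemma is designed to capture. Moreover, the Margulis-type tube analysis you invoke to obtain simplicity, primitivity and divergence of the $\gamma_n$ is only announced (``I would handle this through\dots'') rather than carried out, so even granting existence the chain is incomplete.

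The repair is to drop the closed geodesics entirely and run Gauss--Bonnet on the loops $\ell_n$ themselves, which is what the paper does. Each $\ell_n$ is an embedded geodesic loop, smooth except for one corner at $p_n$ with exterior angle $\theta_n\in(-\pi,\pi)$; it cannot bound a disk, since Gauss--Bonnet would then give $\int_\Omega K_M\,d\sigma=2\pi-\theta_n>0$, contradicting $K_M\leq 0$. Hence two such loops cobound a compact annulus $E(n)\subseteq E$, and Gauss--Bonnet yields $0=2\pi\chi(E(n))=\int_{E(n)}K_M\,d\sigma+\theta_1+\theta_n$, so that $\int_{E(n)}\vert K_M\vert\,d\sigma<4\pi$ uniformly in $n$. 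The corner angles contribute only a bounded error, replacing your exact identity $\int_{A_n}\vert K_M\vert\,d\sigma=\int_\alpha k_g\,ds$, and the rest of your exhaustion argument then goes through unchanged.
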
 
\begin{proof}
Recall that the injectivity radius of $p\in M$ is defined as the supremum 
$$\I_M(p):= \sup\Big\{t>0\,\vert\, \exp_p : B^{T_pM}_t(0_p) \rightarrow B^M_t(p) \text{ is a diffeomorphism} \Big\}$$ or, equivalently, $\I_M(p)=\dist_M(p, C_p),$ being $C_p$ the cut-locus of $p$ in $M$. The injectivity radius of $M$ is defined as $\displaystyle\I(M):=\inf_{p\in M}\{ \I_M(p)\}$. 

If we assume that $\I(M)=0$, there exists therefore a divergent sequence of points $\{p_n\}_{i=1}^\infty$ of $M$ such that $\displaystyle\lim _{n \to \infty} \I_M(p_n)=0$. Because on the contrary, if $\displaystyle\lim_{n \to \infty} p_n=p_0 \in M$, since $\I_M: M \rightarrow \erre_+$ defined as $p \rightarrow \I_M(p)$ is continuous,  then $$\displaystyle\I_M(p_0)=\I_M(\lim_{n \to \infty} p_n)=\lim_{n \to \infty} \I_M(p_n)=0,$$ and this is in contradiction with the fact that, for all $p \in M$, there exists a small positive radius $\epsilon_p>0$ such that the exponential map $\displaystyle\exp_p\vert_{B^{T_pM}_{\epsilon_p}(0_p)}$ is a diffeomorphism onto an open set in $M$. 

On the other hand, $M^2$ has finite topological type which is equivalent to be homeomorphic to the interior of a compact surface with boundary. Each connected component of this boundary is identified with an end of $M$ and is homeomorphic in its turn to a punctured disk. We call these ends  {\em annular} ends.

Hence, there exists an annular end $E$ of $M$ such that a divergent subsequence of $\{ p_n\}_{n=1}^{\infty}$ is included in $E$.

Let $\gamma_n$ be an embedded geodesic loop based at the point $p_n$, which is smooth except perhaps at $p_n$. This loop exist since $M$ has non-positive curvature and hence there are not conjugate points aloneg $\gamma$. We know that the length of this loop is $2 \I_M(p_n)$ and that the external signed angle corresponding to the vertex $p_n$  is  $\theta_n \in (-\pi, \pi)$.  

 Now, we are going to see that $\gamma_n$ is not the boundary of a disk in $M$. If $\gamma_n$ would be the boundary of a disk in $M$, then Gauss-Bonnet theorem should imply that 
\begin{equation}
\int_{\Omega} K_M d\sigma=2\pi \chi(\Omega)-\theta_n> 0,
\end{equation}
\noindent where we have used that $\chi(\Omega)=1$ because $\Omega$ is assumed to be a disk. But taking into account that $K_M\leq 0$, the above inequality is a contradiction.

Now, let us consider a compact annulus $E(n) \subset E$ bounded by the geodesic loops $T_1$ and $T_n$. Applying global Gauss-Bonnet theorem to the domain $E(n)$, we have 

\begin{equation}
0=2\pi \chi\left(E(n)\right)=  \int_{E(n)} K_M d\sigma +\theta_1+\theta_n<\int_{E(n)} K_M d\sigma +4\pi
\end{equation}
Since the above inequality does not depend on $n$,  $E$ has finite absolute total curvature and the lemma follows.
\end{proof}

\begin{remark}\label{rem4.2}Observe that from the proof of the above lemma if we have a divergent sequence $\{p_i\}_{i=1}^\infty$ of points contained in an annular end $E\subset M$, of a surface $M$ with non-positive curvature, and the sequence is such that
$$
\lim_{n\to\infty}\I(p_n)=0.
$$
Then
$$
\int_E\vert K_M\vert\, d\sigma<\infty.
$$
\end{remark}

%%%%%%%%%%%%%%%%%
\subsection{Proof of Theorem \ref{Mainth1}}
%%%%%%%%%%%%%%%%%%%
\medskip

We are going to prove Theorem \ref{Mainth1}, which asserts:

\begin{theorem*}
Let $\varphi: M^2 \longrightarrow N^n$ be a complete Riemannian $2$-manifold immersed in a complete Riemannian manifold $N^n$. Suppose that $M$ has finite topological type, and has non-positive Gaussian curvature, $K_M\leq 0$. Let us suppose moreover that for any $t>0$,
\begin{equation}
\Vol(\varphi^{-1}(B^N_t)) < \infty.
\end{equation}
If every end $E$ of $M$ satisfies $\int_E\vert K_M \vert d\sigma = \infty$ then $M^2$ is properly immersed in $N^n$.
\end{theorem*}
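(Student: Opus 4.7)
The plan is to argue by contradiction, assuming that $\varphi$ is not proper and deriving a violation of the hypothesis that every end of $M$ has infinite total absolute curvature. The strategy parallels the opening of the proof of Theorem \ref{Mainth0}, but the mechanism that forces a contradiction is Remark \ref{rem4.2} rather than a bounded-mean-curvature volume estimate.

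First, I would set up the contradiction in exactly the same way as in Section \S.3. Non-properness of $\varphi$ produces a point $o\in N$ and a radius $R_0>0$ such that $\varphi^{-1}(\bar B^N_{R_0}(o))$ is noncompact, hence a sequence $\{p_n\}\subset M$ that diverges in $M$ while $\varphi(p_n)$ stays in the compact ball $\bar B^N_{R_0}(o)$. Because $M^2$ has finite topological type, it has only finitely many ends, each of them annular; passing to a subsequence I may therefore assume $\{p_n\}$ is eventually contained in a single annular end $E\subset M$.

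Next, I would show that $\lim_{n\to\infty}\I_M(p_n)=0$. Suppose not: then, after extraction, $\I_M(p_n)\geq 2\epsilon$ for some fixed $\epsilon>0$. Since $K_M\leq 0$ and $\epsilon$ is below the injectivity radius at each $p_n$, Bishop--Gromov volume comparison against the Euclidean model gives $\Vol(B^M_\epsilon(p_n))\geq \pi\epsilon^2$. Using that $\{p_n\}$ diverges in $M$, I thin out the sequence so that $d_M(p_n,p_m)>2\epsilon$ for all $n\neq m$, whence the balls $B^M_\epsilon(p_n)$ are pairwise disjoint. Because $\varphi$ is an isometric immersion, it is distance non-increasing from $(M,d_M)$ to $(N,d_N)$, so for any $x\in B^M_\epsilon(p_n)$ one has $d_N(\varphi(x),o)\leq d_N(\varphi(x),\varphi(p_n))+R_0\leq \epsilon+R_0$. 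Consequently
\begin{equation*}
\bigsqcup_{n} B^M_\epsilon(p_n)\subseteq \varphi^{-1}\bigl(B^N_{R_0+\epsilon}(o)\bigr),
\end{equation*}
and summing the volumes yields $\Vol(\varphi^{-1}(B^N_{R_0+\epsilon}(o)))=\infty$, contradicting the standing finite-volume hypothesis on extrinsic balls.

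Finally, having obtained a divergent sequence $\{p_n\}\subset E$ with $\I_M(p_n)\to 0$ in an annular end of a surface with $K_M\leq 0$, Remark \ref{rem4.2} (which distills the intrinsic content of the proof of Lemma \ref{MeeksRosenberg}) gives $\int_E|K_M|\,d\sigma<\infty$. This contradicts the assumption that every end of $M$ has infinite total absolute curvature, so $\varphi$ must in fact be proper.

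The main obstacle I anticipate is the volume lower bound for the intrinsic balls $B^M_\epsilon(p_n)$: one needs $\epsilon$ strictly below $\I_M(p_n)$ together with $K_M\leq 0$ to invoke the model-space comparison. Once that estimate and the subsequence extraction making the balls pairwise disjoint are in place, the remainder of the argument reduces to the disjoint-balls volume trick followed by a direct appeal to Remark \ref{rem4.2}.
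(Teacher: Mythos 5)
Your proposal is correct and follows essentially the same route as the paper: the disjoint intrinsic balls with the G\"unther/Bishop volume lower bound pushed into a single extrinsic ball, combined with the Meeks--Rosenberg-type Lemma \ref{MeeksRosenberg} via Remark \ref{rem4.2}. The only difference is organizational: the paper first deduces $\I(M)>0$ globally from the infinite-total-curvature hypothesis and then runs the volume argument, whereas you run a dichotomy on $\I_M(p_n)$ along the divergent sequence and invoke Remark \ref{rem4.2} in the degenerate case --- the same ingredients in a slightly different order.
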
 
\begin{proof}

Let us suppose that $M$ is not properly immersed. Then, there exists a geodesic ball in $N$, $\bar B^N_{R_0}(o)$, with compact closure, such that $\varphi^{-1}(\bar B^N_{R_0}(o))$ is not compact. Since it is closed, then $\varphi^{-1}(\bar B^N_{R_0}(o))$ is not bounded in $M$.

Therefore, as in the proof of Theorem \ref{Mainth0}, let us consider $\{p_i\}_{i=1}^{\infty} \subseteq \varphi^{-1}(\bar B^N_{R_0}(o))$ a sequence on points which diverges in $M$ and such that $\{\varphi(p_i) \}_{i=1}^\infty$ converges to a limit point $p \in N$. Since the topology of $M$ is finite, then there exists an annular end $E$ such that a divergent subsequence of $\{ p_n\}_{n=1}^{\infty}$ is included in $E$. Denoting as $\{q_n\}_{n=1}^{\infty}$ such subsequence, and given a positive quantity $\epsilon_1$, let us choose a subsequence of $\{q_n\}_{n=1}^{\infty}$ formed by points $q_n$ satisfying $\dist_M(q_n,q_{n+1}) >  \epsilon_1$.

  If every end $E$ has infinite total curvature, then applying Lemma \ref{MeeksRosenberg}, we have that $\I(M)=\delta>0$. Hence, given the sequence $\{q_n\}_{n=1}^{\infty}$ formed by points $q_n$ satisfying $\dist_M(q_n,q_{n+1}) >  \epsilon_1$, we have that  $\inf\{ \inj_M(q_i)\}_{i=1}^\infty\geq \delta >0$, so we shall consider now the geodesic balls $B^M_{\epsilon}(q_i)$ with $\epsilon < \text{min}\{\delta, \frac{\epsilon_1}{2}\}$, in such a way that they are pairwise disjoint.

Now, we proceed as in the proof of Theorem \ref{Mainth0} again, showing in the same way that 
$$\displaystyle\overset{\infty}{\underset{i=1}{\cup}} B^M_{\epsilon}(q_i) \subseteq D_{R_0+\epsilon}=\varphi^{-1}(B^N_{R_0+ \epsilon}(o))$$

%let us see that  $$\displaystyle\overset{\infty}{\underset{i=1}{\cup}} B^M_{\epsilon}(q_i) \subseteq D_{R_0+\epsilon}=\varphi^{-1}(B^N_{R_0+ \epsilon}(o))$$ 

%For that, let us consider $x \in \overset{\infty}{\underset{i=1}{\cup}} B^M_{\epsilon}(q_i)$ then $x \in  B^M_{\epsilon}(q_{i_0})$, so 
%\begin{equation}
%\begin{aligned}
%\dist_N\left(\varphi(x),p\right) \leq& \dist_N\left(\varphi(x), \varphi(q_{i_0})\right) + \dist_N\left(\varphi(q_{i_0}), o\right) \\\leq& \dist_M(x, q_{i_0})+ R_0 \leq \epsilon + R_0.
%\end{aligned}
%\end{equation}

Then, since $K_M \leq 0$ and $\epsilon < \inj(M)$, we have, by using Bishop's-Gromov comparison theorem, that $\Vol(B^M_{\epsilon}(q_i) ) \geq \Vol(B^{\erre^n}_{\epsilon})$, thus

\begin{equation}
\begin{aligned}
\Vol\left( \varphi^{-1}(B^N_{R_0+ \epsilon}(o))\right) \geq &\Vol\left(\overset{\infty}{\underset{i=1}{\cup}}B^M_{\epsilon}(q_i)\right) \\ \geq & \sum_{i=1}^{\infty} \Vol\left( B^M_{\epsilon}(q_i)\right) \geq \sum_{i=1}^{\infty} \Vol\left(B^{\erre^n}_{\epsilon}\right) =\infty
\end{aligned}
\end{equation}

\noindent which is a contradiction with the hypothesis. Hence, $M$ is properly immersed.

\end{proof}

%%%%%%%%%%%%%%%%%%%%%%%
\section{Proof of Theorem \ref{Mainth2}}
%%%%%%%%%%%%%%%%%%%

In this section, we shall prove Theorem \ref{Mainth2}, which asserts:

\begin{theorem*}
Let $M$  be a complete, non compact and orientable surface with finite topology and non-positive Gaussian curvature, $K_M\leq 0$ . Suppose that there exist a geodesic ray  $\gamma \subseteq M$  with zero minimal focal distance,  $\Minf(\gamma)=0$. Then $\I(M) =0$ and, moreover,  $\gamma$ belongs to  an end $E_\gamma \subset M$  of finite total curvature, $\int_{E_\gamma} \vert K_M \vert d\sigma< \infty$.
\end{theorem*}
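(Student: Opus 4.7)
The plan is to turn the hypothesis $\Minf(\gamma)=0$ into a sequence of arbitrarily short non-contractible loops in $M$ based at points of $\gamma$ that escape to infinity, and then feed that sequence into Remark \ref{rem4.2} applied to the end containing the tail of $\gamma$. Since $\gamma$ is a totally geodesic $1$-submanifold, Proposition \ref{Hadamard} together with $K_M\le 0$ yields $e_f(p,\xi)=\infty$ on $\mathbb{S}T\gamma^{\bot}$. Thus every $(p_n,\xi_n)$ realizing $s_n:=e_c(p_n,\xi_n)\to 0$ corresponds to a \emph{non-focal} cut point $q_n:=\exp^{\bot}(p_n,s_n\xi_n)$, at which a standard limiting argument (valid because $M$ is complete) produces a second $\gamma$-minimizing geodesic $\beta_2^n$ of length $s_n$ reaching $q_n$ perpendicularly to $\gamma$ at some point $\gamma(t'_n)$; orientability of $M$ and the smallness of $s_n$ force $t'_n\neq t_n$. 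Because $\gamma$ is a ray, $d_M(\gamma(t_n),\gamma(t'_n))=|t_n-t'_n|$, and the triangle inequality through $q_n$ forces $|t_n-t'_n|\le 2s_n$.

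Writing $\beta_1^n$ for the original normal segment, I form the piecewise-geodesic closed curve
\[
L_n \;:=\; \beta_1^n \;\ast\; \gamma\big|_{[t_n,t'_n]} \;\ast\; (\beta_2^n)^{-1}, \qquad L(L_n)\le 4s_n.
\]
I claim $L_n$ is not null-homotopic. If it bounded a topological disk $D_n$, then since all three sides are geodesics and $\beta_i^n\perp\gamma$ contributes exterior angle $\pi/2$ at each of $\gamma(t_n)$ and $\gamma(t'_n)$, Gauss--Bonnet gives
\[
\int_{D_n} K_M\,d\sigma \;=\; 2\pi - \frac{\pi}{2} - \frac{\pi}{2} - \theta_n \;=\; \pi-\theta_n,
\]
where $\theta_n\in(-\pi,\pi)$ is the exterior angle at $q_n$; the right-hand side is strictly positive, contradicting $K_M\le 0$. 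Hence $L_n$ is a non-contractible loop based at $\gamma(t_n)$ and cannot be contained in any ball on which the exponential map is a diffeomorphism, so $2\,\I_M(\gamma(t_n))\le L(L_n)\le 4s_n$, and in particular $\I_M(\gamma(t_n))\to 0$.

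Continuity and positivity of $\I_M$ forbid any subsequence of $\gamma(t_n)$ from converging in $M$, so (after passing to a subsequence) $t_n\to\infty$. Finite topological type makes every end of $M$ annular, and the tail of the ray $\gamma$ lies in a unique end $E_\gamma$; for large $n$, $\gamma(t_n)\in E_\gamma$, providing a divergent sequence in $E_\gamma$ along which $\I_M\to 0$. Remark \ref{rem4.2} then delivers $\int_{E_\gamma}|K_M|\,d\sigma<\infty$, while $\I(M)\le\lim_n \I_M(\gamma(t_n))=0$ finishes the proof. The main obstacle I expect is the Gauss--Bonnet bookkeeping above: one must check that the two corners along $\gamma$ really contribute exterior angle $+\pi/2$ each (and not $-\pi/2$ or $3\pi/2$), which amounts to verifying that $D_n$, $\beta_1^n$ and $\beta_2^n$ all lie on the same side of the short $\gamma$-arc, and this is where orientability and the smallness of $s_n$ are decisive. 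One should also record the degenerate configuration where $\beta_1^n\cup(\beta_2^n)^{-1}$ is a single smooth geodesic (no corner at $q_n$), in which the bigon version of the computation still produces $\int_{D_n}K_M\,d\sigma=\pi>0$, preserving the contradiction.
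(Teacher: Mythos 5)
Your argument is correct and follows essentially the same route as the paper's proof: Proposition \ref{Hadamard} forces the cut-focal points $q_n$ to be reached by two distinct $\gamma$-minimizing normal geodesics, the resulting short piecewise-geodesic loop is shown non-contractible by Gauss--Bonnet with $K_M\leq 0$, this bounds $\I_M$ along a divergent sequence in the end containing $\gamma$, and Remark \ref{rem4.2} (Lemma \ref{MeeksRosenberg}) yields the finite total curvature. The only cosmetic differences are that you bound the injectivity radius by the half-length of the non-contractible loop where the paper encloses the triangle $T_n$ in $B^M_{4L_n}(p_n)$ and invokes Jordan--Schoenflies, and that the paper admits the degenerate case $r_n=p_n$ as a geodesic loop rather than excluding it.
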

 \begin{proof}
 Since $\gamma$ is a geodesic ray  such that $\Minf(\gamma)=0$, then 
\begin{equation}
\inf_{(p,\bar u) \in \mathbb{S}T\gamma^\perp}e_c(p,\bar u)=0
\end{equation}
with $\mathbb{S}T\gamma^\perp$ being the unitary normal bundle of $\gamma$ in $M$. Hence, there exist a sequence of points  
$$
\left\{(p_n, \bar u_n)\right\}_{n=1}^{\infty} \subset \mathbb{S}T\gamma^\perp
$$ 
such that 
\begin{equation}\label{limit}
\lim_{n \to \infty} e_c(p_n,\bar u_n)=0.
\end{equation}
Since $e_c$ is continuous, $\{(p_n, \bar u_n)\}_{n=1}^{\infty}$ diverges in the unitary normal bundle $\mathbb{S}T\gamma^\perp$, because on the contrary, if $(p_o, \bar u_0)= \lim_{n \to infinity} (p_n, \bar u_n)$, then $e_c(p_0,\bar u_0)=0$ which is a contradiction with the fact that $exp^\bot$ is a diffeomorphism in an open set $\Omega_\gamma$, (see Remark \ref{difeo}).

Taking into account that $\Vert \bar u_n\Vert =1 \,\,\forall n$, then $\{ p_n\}_{n=1}^{\infty}$ diverges in $M$ and then, as the topology of $M$ is finite, there exists an annular end $E_\gamma$ such that a divergent subsequence of $\{ p_n\}_{n=1}^{\infty}$ is included in $E_\gamma$ (in fact, the ray $\gamma$ is included in $E_\gamma$). We are going to prove that the infimum of the injectivity radius of such a subsequence is zero and hence by applying lemma \ref{MeeksRosenberg} (see remark \ref{rem4.2}) $E_\gamma$ has finite total curvature.

 Let $\alpha_{\bar u_n}^{p_n}:[0,\infty)\to M$ be the geodesic curve starting at $p_n$ ($\alpha_{\bar u_n}^{p_n}(0)=p_n$) with initial velocity $\bar u_n$ ($\dot \alpha_{\bar u_n}^{p_n}(0)=\bar u_n$). Then, the points
\begin{equation}
q_n=\alpha_{\bar u_n}^{p_n}\left(e_c\left(p_n,\bar u_n\right)\right)
\end{equation}
are cut-focal points of $\gamma$, where the distance from $p_n$ is no longer minimized along $\alpha_{\bar u_n}^{p_n}$ after $q_n$. By definition, thus,
\begin{equation}
\text L_n:={\rm dist}_M\left(p_n,q_n\right)=e_c\left(p_n,\bar u_n\right).
\end{equation}
Hence, by using the limit (\ref{limit}), we have that $\displaystyle\lim_{n \to \infty} L_n=0$ so, for any $\epsilon>0$, there exists $N$ large enough such that for any $n>N$
 \begin{equation}
\text{dist}_M\left(p_n,q_n\right)<\epsilon.
\end{equation}
Therefore, there exists a divergent subsequence of cut-focal points $\{ q_n\}_{n=1}^{\infty}$ included in the same end $E_\gamma$.

Let us consider now
$$
 \Omega_{\gamma}:=\left\{(p, t \bar u)\in \mathbb{S}T\gamma^\perp,\quad 0\leq t < e_c(p,\bar u)\right\}.
$$ 
It is known (see remark \ref{difeo}) that $\exp^{\bot}: \Omega_{\gamma} \rightarrow \exp^{\bot} (\Omega_{\gamma})$ is a diffeomorphism and that the boundary $\partial \exp^{\bot} (\Omega_\gamma)$ of $\exp^{\bot} (\Omega_\gamma)$ is the set of cut-focal points of $\gamma$. Hence, there exists an infinite divergent sequence of $\{q_n\}_{n=1}^\infty$ in $E_\gamma\cap \partial \exp^{\bot} (\Omega_\gamma)$. 

For these cut-focal points $q_n$, either there is more than one minimizing geodesic joining $q_n$ and $\gamma$ or $\ker\left(\exp^{\bot}_{*q}\right) \neq \{0\}$. But this last possibility implies that $q_n$ is a focal point of $\gamma$ in $M$, and this is not possible because $\gamma$ is a totally geodesic submanifold in the negatively curved manifold $M$, see Proposition \ref{Hadamard}.

Therefore, given a point $p_n \in E_\gamma$, let us consider the cut-focal point $q_n\in E_\gamma$, being  $\alpha_{\bar u_n}^{p_n}(t)$ the geodesic which realizes the distance $L_n=e_c(p_n,\bar u_n)$  from $p_n \in \gamma$ to $q_n$.

 This geodesic meets $\gamma$ orthogonally at the point $p_n$. Since $\exp^{\bot}$ is not injective when we restrict ourselves to the set of cut-focal points, then there exists another geodesic $\beta_n$ minimizing the distance between $q_n$ and $\gamma$. This geodesic will meet $\gamma$ orthogonally at the point $r_n \in \gamma \subseteq E_\gamma$. 
 
Now, let us consider the geodesic triangle $T_n \subseteq E_\gamma$ passing through the vertices $p_n$, $q_n$ and $r_n$, formed by the union of the geodesics $\gamma$, $\alpha_{\bar u_n}^{p_n}$ and $\beta_n$, namely $T_n=\gamma \cup \alpha_{\bar u_n}^{p_n} \cup \beta_n$.

Since $\alpha_{\bar u_n}^{p_n}$ and $\beta_n$ meet $\gamma$ orthogonally, the external signed angles corresponding to the vertices $p_n$ and $r_n$ are $\pm \frac{\pi}{2}$  and the angle between $\alpha_{\bar u_n}^{p_n}$ and $\beta_n$ at the point $q_n$,  $\theta_n \in (-\pi, \pi)$.  At this point, we should note that $q_n$ is not a cusp, (namely, $\vert \theta_n \vert \neq \pi$), because in this case $\alpha_{u_n}^{p_n}$ and $\beta_n$ should be the same geodesic. 
Therefore, the sum of the signed angles is
\begin{equation}
-2\pi<\text{Sum of the external signed angles of } T_n<2\pi
\end{equation}
Observe, that in the particular case when $r_n=p_n$ we have a geodesic loop.

Let us going now to see that $T_n$ can not bound a disk. To see this, let us suppose that $\Omega \subseteq E_\gamma$ is a disk such that $\partial \Omega =T_n$. Since $T_n$ is a piecewise curve formed by geodesic segments, by applying Gauss-Bonnet theorem 
\begin{equation}\label{nodisk}
\int_{\Omega} K_M d\sigma>2\pi \chi(\Omega)-2\pi= 0,
\end{equation}
\noindent where we have used that $\chi(\Omega)=1$ because $\Omega$ is assumed to be a disk. But taking into account that $K_M\leq 0$, the above inequality is a contradiction. 

Finally, to see that ${\rm inj}(M)=0$, we are going to show that $\I_M(p_n) \leq 4 L_n$, and hence, $\displaystyle\lim _{n \to \infty} \I_M(p_n)\leq 4 \lim_{n \to \infty} L_n=0$. To do it, we shall prove first that $T_n \subseteq B^M_{4L_n}(p_n)$ and to see this, let us consider $x\in T_n$. Since $T_n=\gamma \cup \alpha_{\bar u_n}^{p_n} \cup \beta_n$, we have that $x\in \gamma$, or $x \in  \alpha_{\bar u_n}^{p_n}$ or $x \in \beta_n$. Recall that $\text{dist}_M(p_n,q_n)=\text{dist}_M(q_n,r_n)=L_n=e_c(p_n,\bar u_n)$. Then:

If $x\in \alpha_{\bar u_n}^{p_n}$, then
\begin{equation}
\text{dist}_M(p_n,x)\leq \text{dist}_M(p_n,q_n)\leq L_n
\end{equation}

If $x\in \beta_n$, then
\begin{equation}
\begin{aligned}
\text{dist}_M(p_n,x)\leq & \text{dist}_M(p_n,q_n)+\text{dist}_M(q_n,x)\\
\leq&\text{dist}_M(p_n,q_n)+\text{dist}_M(q_n,r_n)\\
\leq& 2 L_n
\end{aligned}
\end{equation}

Finally, if $x\in  \gamma$, then, as $\text{dist}_M(r_n, p_n) \leq \text{dist}_M(p_n,q_n)+\text{dist}_M(q_n,r_n)$,
\begin{equation}
\begin{aligned}
\text{dist}_M(p_n,x)\leq&\text{dist}_M(p_n,q_n)+\text{dist}_M(q_n,r_n)+\text{dist}_M(r_n,x) \\
\leq&\text{dist}_M(p_n,q_n)+\text{dist}_M(q_n,r_n)+\text{dist}_M(r_n,p_n)\leq  4 L_n
\end{aligned}
\end{equation}Hence $T_n \subseteq B^M_{4L_n}(p_n)$. If $\I_M(p_n) > 4L_n$, then $T_n$ (which can not bound a disk) is included in a domain which is homeomorphic to a disk, which is a contradiction by using the Jordan-Schoenflies theorem (see \cite{Cairns} for instance).
\end{proof}

%%%%%%%%%%%%%%%%%%%%%%%%%%%%%%%%%
\section{On the minimal focal distance of a geodesic ray}\label{cor-sec}
%%%%%%%%%%%%%%%%%%

As a first corollary of  Theorem \ref{Mainth2} we have the following result, assuming that the surface has negative curvature. Namely:

 \begin{corollary}\label{Mainth6}  Let $M$  be a complete, non compact, and orientable surface  with finite topology, non-positive Gaussian curvature $K_M \leq 0$ and such that every ray $\gamma$ satisfies that $\Minf(\gamma)=0$. Then, $M$  has finite total curvature, $\int_M \vert K_M\vert d\sigma<\infty$.
\end{corollary}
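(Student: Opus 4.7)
The strategy is to apply Theorem~\ref{Mainth2} to a suitable geodesic ray inside each end of $M$. Since $M$ has finite topology, I would first decompose $M=D\cup E_{1}\cup\cdots\cup E_{k}$, where $D$ is a compact domain containing a base point $p$ and $E_{1},\ldots,E_{k}$ are the finitely many annular ends, chosen so that each $E_{i}$ is an open connected component of $M\setminus D$ (in particular $\partial E_{i}\subset D$). Additivity of the integral then gives
\[
\int_{M}|K_{M}|\,d\sigma \;=\;\int_{D}|K_{M}|\,d\sigma+\sum_{i=1}^{k}\int_{E_{i}}|K_{M}|\,d\sigma,
\]
and the compact contribution is automatically finite, so the problem reduces to showing $\int_{E_{i}}|K_{M}|\,d\sigma<\infty$ for every $i$.

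For each fixed end $E_{i}$, the plan is to construct a ray $\gamma_{i}$ emanating from $p$ that eventually lies in $E_{i}$. I would pick a sequence $\{q_{n}\}\subset E_{i}$ with $d_{M}(p,q_{n})\to\infty$; Hopf--Rinow provides unit-speed minimizing geodesics $\sigma_{n}\colon[0,d_{M}(p,q_{n})]\to M$ from $p$ to $q_{n}$. Compactness of the unit sphere in $T_{p}M$ yields a subsequence with $\sigma_{n_{k}}'(0)\to v$, and $\gamma_{i}(t):=\exp_{p}(tv)$ is a ray, since on any compact interval it is the uniform limit of minimizing geodesics of diverging length. Setting $R:=\sup_{x\in D}d_{M}(p,x)$, I would observe that for every $t>R$ the tail $\sigma_{n_{k}}|_{[t,\,d_{M}(p,q_{n_{k}})]}$ lies outside $D$ (because $d_{M}(p,\sigma_{n_{k}}(s))=s>R$ along a minimizing geodesic from $p$), hence lies in a single component of $M\setminus D$; since this tail ends at $q_{n_{k}}\in E_{i}$, that component is $E_{i}$. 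Passing to the limit and combining with $\gamma_{i}(t)\notin D$ and $\partial E_{i}\subset D$ gives $\gamma_{i}(t)\in E_{i}$ for all $t>R$, so $\gamma_{i}$ belongs to $E_{i}$.

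Once such a ray is in hand for each end, the conclusion is immediate: by the standing hypothesis $\Minf(\gamma_{i})=0$ for every $i$, so Theorem~\ref{Mainth2} assigns to $\gamma_{i}$ an end of finite total curvature, which must coincide with $E_{i}$ because a ray determines its end uniquely via cofinality. Thus $\int_{E_{i}}|K_{M}|\,d\sigma<\infty$ for each of the finitely many ends, and summing completes the proof. The main difficulty is the second paragraph above: one must prevent the limit ray $\gamma_{i}$ from drifting into an end different from $E_{i}$, and this is precisely where the minimizing property of the approximating geodesics (which forces $d_{M}(p,\sigma_{n_{k}}(s))=s$, so tails of minimizing geodesics cannot return to $D$) together with the finiteness of the number of ends become essential.
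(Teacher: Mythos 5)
Your proof is correct and takes essentially the same route as the paper, whose own argument is just the one-line observation that the claim follows from Theorem~\ref{Mainth2} applied to a ray in each of the finitely many ends, plus compactness of the remaining core. You have merely supplied the standard details (the limit-of-minimizing-geodesics construction of a ray belonging to a prescribed end) that the paper leaves implicit.
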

\begin{proof}
The proof of Corollary \ref{Mainth6} follows directly from the hypothesis that {\em all} the rays in $M$ have zero focal distance and that $M$ has finite topology.
\end{proof}

In this section we provide some analytic and geometric sufficient and necessary  conditions for the existence of geodesic rays with zero minimal focal distance. First of all, note that as an immediate consequence of Corollary \ref{voltubray}, we have the following:
\begin{corollary} \label{Mainth7}
Let $M$ be a complete, non compact, and orientable surface with non-positive Gaussian curvature and finite area. Then, for any geodesic ray $\gamma$ in $M$ ,
\begin{equation}
\Minf(\gamma)=0.
\end{equation}
\end{corollary}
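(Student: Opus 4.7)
The plan is to argue by contradiction, leveraging Corollary \ref{voltubray} directly. Suppose there exists a geodesic ray $\gamma$ in $M$ with $\Minf(\gamma) > 0$. Since $M$ has non-positive Gaussian curvature, the hypotheses of Corollary \ref{voltubray} are met, so for any fixed radius $r$ with $0 < r < \Minf(\gamma)$, we obtain
\begin{equation*}
\Vol(T_r(\gamma)) = \infty.
\end{equation*}

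On the other hand, the tube $T_r(\gamma)$ is, by definition, a subset of $M$, so
\begin{equation*}
\Vol(T_r(\gamma)) \leq \Vol(M) < \infty
\end{equation*}
by the finite area hypothesis. This contradicts the previous identity, so no such $\gamma$ with positive minimal focal distance can exist, and the corollary follows.

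Since $M$ is two-dimensional, the notion of ``volume'' appearing in Corollary \ref{voltubray} agrees with the area of $M$, so there is no dimensional mismatch in the chain of inequalities above. The only ingredient used beyond the statement of Corollary \ref{voltubray} is the monotonicity of the Riemannian measure with respect to set inclusion, which is immediate. Thus the proof is essentially a one-line application of Corollary \ref{voltubray}, and no further geometric or topological input (such as Lemma \ref{MeeksRosenberg} or Proposition \ref{Hadamard}) is needed. There is no serious obstacle here; the only point requiring mild care is to verify that the tube $T_r(\gamma)$ is well-defined as a subset of $M$ for $r < \Minf(\gamma)$, which is guaranteed by the very definition of the minimal focal distance together with Remark \ref{difeo}.
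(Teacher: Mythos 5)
Your proof is correct and is exactly the argument the paper intends: the statement is presented there as an immediate consequence of Corollary \ref{voltubray}, obtained by the same contradiction between $\Vol(T_r(\gamma))=\infty$ and $T_r(\gamma)\subseteq M$ with $\Vol(M)<\infty$.
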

Hence, in the non-positively curved setting, finite area implies zero minimal focal distance for any geodesic ray. Observe, however that the converse is in general false, see for instance example  \ref{warped} in Section \S.1, where we have zero minimal focal distance for a geodesic ray and infinite area. Nevertheless, by using Theorem \ref{Mainth2}  we can state
\begin{corollary}
Let $M$ be a complete, non compact, and orientable surface with negative Gaussian curvature $K_M\leq b<0$ and finite topological type. Given a geodesic ray $\gamma$ in $M$,  $\Minf(\gamma)=0$ if and only if there exist an end $E_\gamma$ with finite area such that $\gamma$ belongs to $E_\gamma$.
\end{corollary}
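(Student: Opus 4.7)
The plan is to prove the two implications separately. The forward direction will follow almost immediately from Theorem \ref{Mainth2} combined with the strict lower bound on $|K_M|$, while the converse will be established by contradiction using Corollary \ref{voltubray} together with the observation that a full tube around a sufficiently distant tail of $\gamma$ must sit inside the end $E_\gamma$.

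For the direction $\Minf(\gamma)=0 \Rightarrow \gamma$ lies in a finite-area end, I will apply Theorem \ref{Mainth2} directly: the hypotheses $K_M \leq b < 0$ (hence $K_M\leq 0$) and finite topological type are met, so there exists an end $E_\gamma$ containing $\gamma$ with $\int_{E_\gamma} |K_M|\,d\sigma < \infty$. The strict negativity $K_M \leq b < 0$ gives $|K_M| \geq |b|$ pointwise on $E_\gamma$, so $|b|\,\Vol(E_\gamma) \leq \int_{E_\gamma} |K_M|\,d\sigma < \infty$, and $E_\gamma$ has finite area.

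For the converse, I will argue contrapositively: assume $\Minf(\gamma) > 0$ and derive $\Vol(E_\gamma)=\infty$. Fix $r$ with $0 < r < \Minf(\gamma)$. Because $M$ has finite topological type, $E_\gamma$ is an annular end whose topological boundary $\partial E_\gamma$ in $M$ is compact. The identity $\dist_M(\gamma(0),\gamma(t)) = t \to \infty$, combined with the compactness of $\partial E_\gamma$, yields $\dist_M(\gamma(t), \partial E_\gamma) \to \infty$; hence for $T$ large enough the sub-ray $\tilde\gamma := \gamma|_{[T,\infty)}$ remains at distance strictly greater than $r$ from $\partial E_\gamma$. A triangle inequality applied pointwise, together with the connectedness of metric balls, will then give $T_r(\tilde\gamma) \subseteq E_\gamma$. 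Moreover $\tilde\gamma \subset \gamma$ forces $\dist_M(\,\cdot\,,\tilde\gamma) \geq \dist_M(\,\cdot\,,\gamma)$, so $e_c^{\tilde\gamma}(p,\xi) \geq e_c^\gamma(p,\xi)$ for every $(p,\xi) \in \mathbb{S}T\tilde\gamma^\perp$; passing to the infimum (over the smaller unit normal bundle of $\tilde\gamma$) gives $\Minf(\tilde\gamma) \geq \Minf(\gamma) > r$. Corollary \ref{voltubray} applied to $\tilde\gamma$ then yields $\Vol(T_r(\tilde\gamma)) = \infty$, contradicting $\Vol(E_\gamma) < \infty$.

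I expect the main obstacle to be verifying the containment $T_r(\tilde\gamma) \subseteq E_\gamma$. Two ingredients conspire here: the compactness of $\partial E_\gamma$, which is where the finite-topology hypothesis enters through the annular structure of the ends of $M$, and the fact that a metric ball $B_r(\gamma(t))$ that is disjoint from $\partial E_\gamma$ and meets $E_\gamma$ must lie entirely in $E_\gamma$ by connectedness. A secondary subtlety is the monotonicity $\Minf(\tilde\gamma) \geq \Minf(\gamma)$: it is not tautological since $e_c$ depends on which submanifold one measures distance to, but the elementary comparison of distance functions sketched above is what makes it go through.
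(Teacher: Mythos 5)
Your proof is correct and follows exactly the route the paper intends (the corollary is stated there without an explicit proof): the forward implication is Theorem \ref{Mainth2} plus the pointwise bound $\vert K_M\vert \geq \vert b\vert >0$, and the converse is the contrapositive of the tube-volume estimate of Corollary \ref{voltubray}, localized to the end. Your sub-ray argument --- using compactness of $\partial E_\gamma$ to force $T_r(\tilde\gamma)\subseteq E_\gamma$ and the distance-function comparison to get $\Minf(\tilde\gamma)\geq\Minf(\gamma)$ --- correctly supplies the localization detail that the paper leaves implicit.
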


Therefore, 

\begin{corollary}
Let $M$ be a complete, non compact, and orientable surface with negative Gaussian curvature $K_M<0$ and finite topological type. Suppose that every end of $M$ has infinite area. Suppose, moreover, that a geodesic ray $\gamma$ in $M$ has $\Minf(\gamma)=0$. Then, there exist an end $E_\gamma$ such that $\gamma$ belongs to $E_\gamma$ and a  divergent sequence of points $\{p_i \}_{i=1}^\infty\subset E_\gamma$, such that
\begin{equation}\label{flat}
\lim_{i\to\infty}K_M(p_i)=0.
\end{equation}
\end{corollary}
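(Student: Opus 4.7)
The plan is to derive the result as a straightforward consequence of Theorem \ref{Mainth2} together with the infinite-area hypothesis on the ends, via a contradiction argument based on the integrability of $|K_M|$ on $E_\gamma$.

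First, I would invoke Theorem \ref{Mainth2}. Since $K_M < 0$ implies $K_M \leq 0$, and $M$ has finite topological type with $\Minf(\gamma) = 0$, the theorem guarantees that $\gamma$ belongs to an end $E_\gamma \subset M$ satisfying
\begin{equation*}
\int_{E_\gamma} |K_M| \, d\sigma < \infty.
\end{equation*}
This is the only place where the hypothesis on the ray is used; the remainder of the argument is a soft measure-theoretic deduction on $E_\gamma$.

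Next, fix a basepoint $\bar p \in M$. I would argue by contradiction: suppose no divergent sequence $\{p_i\} \subset E_\gamma$ satisfies $K_M(p_i) \to 0$. Equivalently, $\liminf_{p \in E_\gamma,\, d_M(\bar p, p) \to \infty} |K_M(p)| > 0$, so there exist $\epsilon > 0$ and $R > 0$ such that
\begin{equation*}
|K_M(p)| \geq \epsilon \quad \text{for every } p \in E_\gamma \setminus B^M_R(\bar p).
\end{equation*}
Because $E_\gamma$ has infinite area by hypothesis and $B^M_R(\bar p) \cap E_\gamma$ has finite area (as $B^M_R(\bar p)$ is relatively compact in the complete manifold $M$), the complement $E_\gamma \setminus B^M_R(\bar p)$ has infinite area. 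Then
\begin{equation*}
\int_{E_\gamma} |K_M| \, d\sigma \geq \int_{E_\gamma \setminus B^M_R(\bar p)} |K_M| \, d\sigma \geq \epsilon \cdot \text{Area}\bigl(E_\gamma \setminus B^M_R(\bar p)\bigr) = \infty,
\end{equation*}
contradicting the finite total curvature of $E_\gamma$ obtained above. Hence for every $n \in \mathbb{N}$ one can select $p_n \in E_\gamma$ with $d_M(\bar p, p_n) > n$ and $|K_M(p_n)| < 1/n$, producing the required divergent sequence with $K_M(p_n) \to 0$.

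I do not anticipate a serious obstacle here: the entire content is packaged in Theorem \ref{Mainth2}, and the passage from ``finite total curvature + infinite area'' to ``curvature decays to zero along a divergent subsequence'' is standard. The only minor point requiring care is verifying that $B^M_R(\bar p) \cap E_\gamma$ has finite area so that the infinite-area condition truly transfers to the exterior $E_\gamma \setminus B^M_R(\bar p)$, but this follows immediately from completeness of $M$ and smoothness of the metric on the compact set $\overline{B^M_R(\bar p)}$.
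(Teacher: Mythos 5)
Your proposal is correct and follows the same route the paper intends: the corollary is presented as a direct consequence of Theorem \ref{Mainth2} (which gives the end $E_\gamma$ containing $\gamma$ with $\int_{E_\gamma}\vert K_M\vert\,d\sigma<\infty$), combined with the infinite-area hypothesis to force curvature decay along a divergent sequence. Your contradiction argument, including the check that $E_\gamma\setminus B^M_R(\bar p)$ retains infinite area, fills in exactly the soft step the paper leaves implicit.
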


Whence, the conclusion of the above corollaries becomes clear. In a negatively curved surface of finite topological type the presence of a geodesic ray $\gamma$ with zero minimal focal distance implies that there exist an end containing $\gamma$ with finite area or the end is asymptotically flat (in the sense of the limit (\ref{flat})).

\begin{remark}
From Corollaries \ref{Mainth6} and \ref{Mainth7} we can conclude moreover that given $M$ a complete, non-compact and orientable surface with finite topology, finite area and non-positive curvature, then $M$ has finite total curvature.

\end{remark}

K. Ichihara proved in \cite{I} that if a surface (or an end) has finite total curvature, the surface (or the end) is a parabolic surface (or end). The same is true if the surface (or end) has finite total area.  Hence,

\begin{corollary}
Let $M$ be a negatively curved and orientable surface of finite topological type, if every end of $M$ is an hyperbolic end, then for any geodesic ray $\gamma$ in $M$,
\begin{equation}
\Minf(\gamma)>0.
\end{equation}  
\end{corollary}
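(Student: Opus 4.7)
The plan is to argue by contradiction, chaining together Theorem \ref{Mainth2} with Ichihara's parabolicity criterion recalled just above the corollary. Suppose, for the sake of contradiction, that there exists a geodesic ray $\gamma \subset M$ with $\Minf(\gamma)=0$. I would first check that all the hypotheses of Theorem \ref{Mainth2} are satisfied: $M$ is complete and non-compact (implicit in the phrase ``negatively curved surface of finite topological type''), orientable, of finite topology, and in particular has $K_M \leq 0$ since $K_M<0$.

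Applying Theorem \ref{Mainth2} to $\gamma$, I would obtain an end $E_\gamma \subset M$ that contains $\gamma$ and has finite total absolute curvature, that is, $\int_{E_\gamma} |K_M|\, d\sigma < \infty$. At this point I would invoke the result of K.\ Ichihara from \cite{I}, recalled in the remark immediately preceding the corollary, to the effect that an end of finite total absolute curvature is parabolic. But this directly contradicts the hypothesis that every end of $M$ is hyperbolic. Hence no such $\gamma$ can exist, and $\Minf(\gamma)>0$ for every geodesic ray in $M$.

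I do not foresee any genuine obstacle: the strictly negative curvature hypothesis is stronger than the non-positive curvature assumption needed to apply Theorem \ref{Mainth2}, so no adaptation is required, and the parabolicity criterion of \cite{I} is taken directly off the shelf. In short, the corollary is the contrapositive of the implication chain ``$\Minf(\gamma)=0$ $\Longrightarrow$ some end of $M$ has finite total curvature $\Longrightarrow$ some end of $M$ is parabolic.''
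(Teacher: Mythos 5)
Your argument is correct and is exactly the one the paper intends: the corollary is stated immediately after recalling Ichihara's result precisely so that it follows as the contrapositive of Theorem \ref{Mainth2} combined with ``finite total curvature $\Rightarrow$ parabolic end.'' Your check that strict negativity of $K_M$ subsumes the hypothesis $K_M\leq 0$ of Theorem \ref{Mainth2} is the only verification needed, and you handled it.
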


Moreover, if a surface (or an end) has positive fundamental tone, the surface is hyperbolic (see \cite{Gri}). Recall that the fundamental tone $\lambda^*(M)$ of a surface $M$ is defined by
\begin{equation}\label{Ray} 
\lambda^{*}(M)=\inf\{\frac{\smallint_{M}\vert
\nabla f \vert^{2}dA}{\smallint_{M} f^{2}dA};\, f\in
 {L^{2}_{1,0}(M )\setminus\{0\}}\}
\end{equation}
\noindent where $L^{2}_{1,\,0}(M )$ is the completion  of
$C^{\infty}_{0}(M )$ with respect to the norm $ \Vert\varphi
\Vert^2=\int_{M}\varphi^{2}+\int_{M} \vert\nabla \varphi\vert^2$.

From the definition of the fundamental tone, one readily concludes that for any end $E\subset M$
\begin{equation}
\lambda^*(M)\leq \lambda^*(E).
\end{equation}

Thus,

\begin{corollary}
Let $M$ be a negatively curved and orientable surface of finite topological type, if  $M$ has positive fundamental tone $\lambda^*(M)>0$, then for any geodesic ray $\gamma$ in $M$,
\begin{equation}
\Minf(\gamma)>0.
\end{equation}  

\end{corollary}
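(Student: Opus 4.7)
The plan is to reduce this corollary to the immediately preceding one, which already states that if every end of $M$ is hyperbolic, then $\Minf(\gamma) > 0$ for every geodesic ray $\gamma \subseteq M$. It therefore suffices to show that the hypothesis $\lambda^*(M) > 0$ forces every end of $M$ to be hyperbolic.

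First, for any end $E \subseteq M$, the inequality $\lambda^*(M) \leq \lambda^*(E)$ recorded just above the statement, combined with the assumption $\lambda^*(M) > 0$, yields $\lambda^*(E) > 0$. Then the criterion cited from Grigor'yan \cite{Gri} — positive fundamental tone implies hyperbolicity — applied end by end gives that every end of $M$ is hyperbolic. At this point the preceding corollary applies verbatim under the standing hypotheses (complete, non-compact, orientable, negative Gaussian curvature, finite topological type), and delivers $\Minf(\gamma) > 0$ for every geodesic ray $\gamma \subseteq M$.

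The only minor point worth a comment is the legitimacy of invoking Grigor'yan's criterion for an end $E$ rather than for a complete manifold, but this is routine: $\lambda^*(E)$ is defined via test functions with compact support inside $E$, and hyperbolicity of $E$ is characterized by exactly the same capacity-at-infinity condition as in the global case. Since all of the substantive work has already been carried out in Theorem \ref{Mainth2} and in the chain of corollaries above, no new obstacle arises and the argument is essentially a two-line application of previously established facts; the main (and only) thing to be careful about is that the implication "$\lambda^*>0 \Rightarrow$ hyperbolic" is used in the end-wise form, which is precisely how the preceding paragraph of the paper sets it up.
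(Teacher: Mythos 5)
Your argument is correct and is precisely the one the paper intends: the chain $\lambda^*(M)>0 \Rightarrow \lambda^*(E)>0$ for every end $E$ (via the stated inequality $\lambda^*(M)\leq\lambda^*(E)$) $\Rightarrow$ every end is hyperbolic (Grigor'yan's criterion) $\Rightarrow$ the preceding corollary applies. The paper leaves this as the implicit ``Thus'' following its discussion, so your write-up matches its approach exactly.
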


%\def\cprime{$'$} \def\cprime{$'$} \def\cprime{$'$} \def\cprime{$'$}
 % \def\cprime{$'$}
%\providecommand{\bysame}{\leavevmode\hbox to3em{\hrulefill}\thinspace}
%\providecommand{\MR}{\relax\ifhmode\unskip\space\fi MR }
% \MRhref is called by the amsart/book/proc definition of \MR.
%\providecommand{\MRhref}[2]{%
 % \href{http://www.ams.org/mathscinet-getitem?mr=#1}{#2}
%}
%\providecommand{\href}[2]{#2}

\end{document}